\documentclass[12pt, reqno]{amsart}
\usepackage[T1]{fontenc}
\usepackage{amsfonts}
\usepackage{amsmath}
\usepackage{amsthm}
\usepackage{amssymb}
\usepackage{geometry}
\usepackage{graphicx}
\usepackage{xcolor}
\usepackage{mathtools}
\usepackage[colorlinks=true, linkcolor=blue, citecolor=black]{hyperref}
\usepackage{cleveref}
\usepackage[english]{babel}
\usepackage{lineno}
\usepackage{float}
\usepackage{stmaryrd}

\newtheorem{theorem}{Theorem}[section]
\newtheorem{definition}{Definition}[section]

\newtheorem{lemma}[theorem]{Lemma}
\newtheorem{conjecture}[theorem]{Conjecture}

\usepackage{tikz}
\usetikzlibrary{positioning}
\usetikzlibrary{decorations,arrows}
\usetikzlibrary{decorations.markings}
\numberwithin{equation}{section}

\newcommand{\ints}[1]{\llbracket #1 \rrbracket}
\newcommand{\intss}[2]{\llbracket #1,#2 \rrbracket}

\usepackage{rustic}
\usepackage[T1]{fontenc}

\emergencystretch=\maxdimen
\title{Proper edge coloring with rainbow diamonds}
\author{Runze Wang}
\email{runze.w@hotmail.com; rwang6@memphis.edu}
\thanks{}
\subjclass[2020]{05C15}

\begin{document}

\sloppy

\begin{abstract}
    Motivated by the B-coloring defined by Gy\'arf\'as and S\'ark\"ozy, we introduce a new edge coloring called \emph{D-coloring}. For a graph $G$, a D-coloring of $G$ is a proper edge coloring such that every diamond subgraph is rainbow. The \emph{D-chromatic index} of $G$, denoted by $\chi'_D(G)$, is the minimum number of colors needed for a D-coloring of $G$. Denote by $\Delta$ the maximum degree of $G$. We prove that $\chi'_D(G)\le \frac{9}{16}\Delta^2+\frac{1}{2}\Delta$, conjecture that $\chi'_D(G)\le \frac{1}{2}\Delta^2+\frac{1}{2}\Delta$, and verify this conjecture for $\Delta\le 5$.
\end{abstract}
\keywords{D-coloring; Edge coloring; Rainbow diamond}

\maketitle

\section{Introduction}
In this paper, we only study finite simple graphs. For a graph $G=(V,\,E)$ and two edges $e,\,e'\in E$, the distance between $e$ and $e'$, denoted by $d(e,\,e')$, is the distance between the corresponding vertices in the line graph of $G$. So, if two edges are incident, then the distance between them is $1$; if two edges are not incident but connected by a third edge, then the distance between them is $2$. An edge coloring of $G$ is called a \emph{strong edge coloring} if, for any $e,\,e'\in E$ with $d(e,\,e')\le 2$, $e$ and $e'$ get different colors. The \emph{strong chromatic index} of $G$, denoted by $\chi'_s(G)$, is the minimum number of colors needed for a strong edge coloring of $G$. Regarding the strong chromatic indices of graphs with restricted maximum degree, Erd\H os and Ne\v set\v ril \cite{EN} made the following conjecture.

\begin{conjecture}[Erd\H os and Ne\v set\v ril \cite{EN}]\label{conj1}
For every graph $G$ with maximum degree $\Delta$,
\[
    \chi'_s(G)\le\begin{cases}
        \frac{5}{4}\Delta^2 &\text{if } \Delta \text{ is even}, \\
        \frac{5}{4}\Delta^2-\frac{1}{2}\Delta+\frac{1}{4} &\text{if } \Delta \text{ is odd}.
    \end{cases}
\]
\end{conjecture}

Denote by $P_n$ the path with $n$ vertices, and by $C_n$ the cycle with $n$ vertices. If this conjecture holds, the conjectured upper bound will be sharp, as it can be attained by a balanced blowup of $C_5$.

Regarding this conjecture, the first non-trivial case $\Delta=3$ has been verified by Andersen \cite{An} and by Hor\'ak et al.~\cite{HHT}, but it is still open for $\Delta\ge 4$. For large $\Delta$, Molloy and Reed \cite{MR} proved that $\chi'_s(G)\le 1.998\Delta^2$, which was subsequently improved to $\chi'_s(G)\le 1.93\Delta^2$ by Bruhn and Joos \cite{BJ}, to $\chi'_s(G)\le 1.835\Delta^2$ by Bonamy et al.~\cite{BPP}, and to $\chi'_s(G)\le 1.772\Delta^2$ by Hurley et al.~\cite{HDK}. For other recent progress on strong edge colorings, see \cite{CHYZ,HSY,LL,LLH,LLY,NN,NY,Wa1,Wa2,WL}.

In an edge-colored graph $G$, a subgraph $H\subseteq G$ is \emph{rainbow} if all the edges in $H$ have distinct colors. In \cite{GS}, Gy\'arf\'as and S\'ark\"ozy defined the following edge colorings.

\begin{definition}[Gy\'arf\'as and S\'ark\"ozy \cite{GS}]
For a graph $G$,
\begin{itemize}
    \item an \emph{A-coloring} of $G$ is a proper edge coloring such that the union of any two color classes does not contain any $P_5$ or $C_4$;
    \item a \emph{B-coloring} of $G$ is a proper edge coloring such that every $C_4$ subgraph is rainbow;
    \item a \emph{C-coloring} of $G$ satisfies both conditions for A-coloring and B-coloring.
\end{itemize}
\end{definition}

Each of these definitions is more restrictive than the proper edge coloring and less restrictive than the strong edge coloring. In fact, for any graph $G$, a strong edge coloring of $G$ is always an A/B/C-coloring of $G$.

A-colorings are also known as \emph{star edge colorings}, and before being defined in \cite{GS}, they had already been studied from different motivations (see, e.g., \cite{BLMSS,DMS,EG}). But the concept of B-colorings was first introduced in \cite{GS}, and it has since been studied on planar graphs in \cite{GMRS,KWZ} and on bipartite graphs in \cite{GSW}. 

Note that, equivalently, we can define a B-coloring of $G=(V,\,E)$ as a proper edge coloring of $G$ such that, for any two edges $uv$ and $wx$ with $d(uv,\,wx)=2$, if $\{uw,\,vx\}\subseteq E$ or $\{ux,\,vw\}\subseteq E$, then $uv$ and $wx$ get different colors. So, under a B-coloring, only some specific pairs of edges of distance $2$ have to get different colors --- this is another intuitive way to understand why the B-coloring is less restrictive than the strong edge coloring.

Motivated by how B-colorings are defined, we introduce a new edge coloring called \emph{D-coloring}. We call it D-coloring because, first, it is defined after A/B/C-colorings, and second, D is the first letter of "diamond".

\begin{definition}
    For a graph $G$, a D-coloring of $G$ is a proper edge coloring such that every diamond subgraph \tikz[baseline=-0.5ex]{
  \node[circle,fill,inner sep=1pt] (a) at (0,0) {};
  \node[circle,fill,inner sep=1pt] (b) at (0.3,0.18) {};
  \node[circle,fill,inner sep=1pt] (c) at (0.3,-0.18) {};
  \node[circle,fill,inner sep=1pt] (d) at (0.6,0) {};
  \draw (a)--(b)--(d)--(c)--(a);
  \draw (b)--(c);
} is rainbow.
\end{definition}

Equivalently, we can also define a D-coloring of $G=(V,\,E)$ as a proper edge coloring such that, for any two edges $uv$ and $wx$ with $d(uv,\,wx)=2$, if $|\{uw,\,ux,\,vw,\,vx\}\cap E|\ge 3$, then $uv$ and $wx$ get different colors. 

It is clear that the D-coloring is less restrictive than the B-coloring, because for any graph $G$, a B-coloring of $G$ is always a D-coloring of $G$.

The \emph{B-chromatic index} of $G$, denoted by $\chi'_B(G)$, is the minimum number of colors needed for a B-coloring of $G$. For graphs with restricted maximum degree, as mentioned in \cite{GS}, by a greedy coloring, we have $\chi'_B(G)\le \Delta^2$ for every graph $G$ with maximum degree $\Delta$. This upper bound is sharp, as it can be attained by the complete bipartite graph $K_{\Delta,\,\Delta}$.

However, for D-colorings, this problem is much more interesting. The \emph{D-chromatic index} of $G$, denoted by $\chi'_D(G)$, is the minimum number of colors needed for a D-coloring of $G$. For D-chromatic indices, we make a conjecture analogous to Conjecture \ref{conj1}.

\begin{conjecture}\label{conj2}
For every graph $G$ with maximum degree $\Delta$,
\begin{align*}
    \chi'_D(G)\le \frac{1}{2}\Delta^2+\frac{1}{2}\Delta.
\end{align*}
\end{conjecture}

If true, this upper bound will be sharp, because under a D-coloring of the complete graph $K_{\Delta+1}$, every edge has to get a distinct color, which means we need ${\Delta+1 \choose 2}=\frac{1}{2}\Delta^2+\frac{1}{2}\Delta$ colors.

We will verify this conjecture for $\Delta\le 5$.

\begin{theorem}\label{delta5}
    For every graph $G$ with maximum degree $\Delta\le 5$,
\begin{align*}
    \chi'_D(G)\le \frac{1}{2}\Delta^2+\frac{1}{2}\Delta.
\end{align*}
\end{theorem}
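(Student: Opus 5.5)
We argue by a minimal counterexample together with a greedy extension. Fix $\Delta\le 5$ and let $G$ be a counterexample with the fewest edges, so every proper subgraph of $G$ has a D-coloring with $k=\frac12\Delta^2+\frac12\Delta$ colors; here $k=1,3,6,10,15$ for $\Delta=1,\dots,5$. For an edge $e=uv$, let $N_D(e)$ be the set of edges $f\ne e$ that must receive a color different from $e$. These are the edges incident to $u$ or $v$, together with the edges $wx$ at distance $2$ from $e$ such that at least three of $uw,ux,vw,vx$ are edges. If some edge $e$ satisfies $|N_D(e)|<k$, we color $G-e$ by minimality and extend to $e$. Note that $N_D(e)$ computed in $G$ contains every edge that conflicts with $e$, since removing $e$ only destroys diamonds. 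So it suffices to show that a counterexample has no edge with $|N_D(e)|\le k-1$, and then to derive a contradiction, possibly by a more careful recoloring argument in the tight cases.

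The first step is to count $|N_D(e)|$. Write $d(u)=a$ and $d(v)=b$. Edges incident to $e$ contribute $a+b-2\le 2\Delta-2$. An edge $wx$ of the second type must contain a neighbor of both $u$ and $v$: if $w$ is a common neighbor, then $x$ is adjacent to $u$ or $v$. Let $t$ be the number of common neighbors of $u$ and $v$. Each common neighbor $w$ has at most $\Delta-3$ edges $wx$ with $x\notin\{u,v\}$ and $x$ not itself a common neighbor. So the second-type edges number at most about $t(\Delta-2)-\binom{t}{2}$, plus the edges inside the common neighborhood. Combining,
\[
|N_D(e)|\le 2\Delta-2+t(\Delta-3)+\tbinom{t}{2},
\]
with $t\le\Delta-1$. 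At $t=\Delta-1$ this equals $2\Delta-2+(\Delta-1)(\Delta-3)+\binom{\Delta-1}{2}$. That value is $6$ for $\Delta=3$, which is $\le 6-1$ only after a refined count, and for $\Delta=4,5$ we get $9$ and $16$, against $k-1=9$ and $14$. So the plain greedy bound settles $\Delta\le 4$ once the count is made exact; for $\Delta\le 2$ it is trivial. The exact count uses the fact that an edge $wx$ with both endpoints common neighbors lies inside the $K_4$ on $\{u,v,w,x\}$, and that each such edge is not double-counted.

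For $\Delta=5$ the greedy count can reach or exceed $15$ only when $t$ is large ($t=3$ or $4$) and the neighborhoods are dense. The second step is therefore a structural analysis of edges $e$ with $|N_D(e)|\ge 15$. In that case $u$ and $v$ both have degree $5$, they share at least $3$ neighbors, and the common neighbors have full degree with most of their edges going to other neighbors of $u$ or $v$. Choosing $e$ to minimize $|N_D(e)|$, and using that this holds for every edge, the local structure is forced close to $K_6$. Since $G\ne K_6$ is connected with $\Delta=5$, some vertex of the dense cluster has an edge leaving it. An edge near the boundary then has fewer conflicts, which gives a contradiction. If some edge still has exactly $15$ conflicts, we need that two of its conflicting edges may share a color, for instance two disjoint non-diamond-related edges in $N_D(e)$. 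We would obtain this by precoloring or recoloring: pick $f,f'\in N_D(e)$ with $f'\notin N_D(f)$, give them the same color first, and then color the rest greedily in an order ending with $e$.

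The main obstacle is this $\Delta=5$ case analysis. It requires showing that whenever every edge has at least $15$ D-conflicts, the graph contains two edges conflicting with a common edge but not with each other. Such a pair, colored identically, saves the one color needed, unless $G=K_6$, which is colorable with $15$ colors trivially. We would organize the analysis by the number $t\in\{3,4\}$ of common neighbors.
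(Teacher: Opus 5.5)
There are genuine gaps, beginning with the reduction itself. Deleting an edge is not safe for D-colorings. Suppose $e=cd$ lies in a diamond on $\{a,b,c,d\}$ with $ab\notin E$. Then in $G-e$ nothing need force $ac$ and $bd$ to differ, so the $k$-D-coloring of $G-e$ supplied by minimality may give them the same color. In that case no color for $e$ produces a D-coloring of $G$. Your remark that removing $e$ ``only destroys diamonds'' is exactly the problem: destroyed diamonds carry constraints between pairs of edges other than $e$. So ``a counterexample has no edge with $|N_D(e)|\le k-1$'' does not follow from edge-minimality.

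The paper deletes a vertex $u$ instead. Every non-incident pair in a destroyed diamond then contains an edge at $u$ (Lemma~\ref{good}), and the edges at $u$ are colored in order of increasing list size (Lemma~\ref{extension}). Since every edge sees at most $k$ edges when $\Delta\in\{4,5\}$, each edge at $u$ has $\ell_f\ge\Delta-1$, and one with $\ell_f\ge\Delta$ can be colored last. This is what legitimately gives ``every edge of a minimal counterexample sees exactly $k$ edges.''

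Second, your count is wrong, and the conclusion you draw from it is false. A common neighbor may send $\min\{\Delta-2,\,2(\Delta-1-t)\}$ edges to private neighbors, not $\Delta-3$; your numerical values also do not match your own formula. The correct count uses $m$ edges inside the common neighborhood $Z$ and $n$ edges from $Z$ to private neighbors, with $2m+n\le t(\Delta-2)$ and $n\le 2t(\Delta-1-t)$. It gives $\max|N_D(e)|=k$, not $k-1$, for both $\Delta=4$ and $\Delta=5$.

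For $\Delta=4$ the maximum is attained. In the octahedron $K_{2,2,2}$, each edge has $t=2$ common neighbors, each adjacent to both private neighbors, so $|N_D(e)|=6+4=10$. Hence greedy (applied globally, as in Theorem~\ref{general}) settles only $\Delta\le 3$. For $\Delta=4$ you need the tightness analysis: applied to $uv$ and then to an adjacent edge $uw_i$, it forces $G=K_{2,2,2}$, and you must then exhibit a $6$-D-coloring of the octahedron.

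For $\Delta=5$ the extremal structure is not ``close to $K_6$''. The case $t=4$ gives at most $8+6=14$ (every edge of $K_6$ has $14$ conflicts). So equality forces $t=3$ and $m+n=7$, hence $m\in\{1,2\}$. Tightness at a suitable $uw_i$ then forces $w_4x_4\in E$. After that, some vertex of this $7$-vertex cluster has degree $4$, so it has a neighbor $y$ outside the cluster with which it shares no common neighbor. That edge lies in no diamond and sees only $8$ edges, which is the contradiction.

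Your fallback is also unavailable: giving two non-conflicting $f,f'\in N_D(e)$ the same color and then ``coloring the rest greedily'' does not work. Minimality yields an arbitrary coloring of the smaller graph with no control over $f,f'$. A global greedy coloring also fails when many edges have $15$ conflicts.
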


Although we cannot completely resolve this conjecture, we can prove an upper bound for $\chi'_D(G)$ with quadratic coefficient $\frac{9}{16}$.

\begin{theorem}\label{general}
    For every graph $G$ with maximum degree $\Delta$,
\begin{align*}
    \chi'_D(G)\le \Bigl\lfloor\frac{9}{16}\Delta^2+\frac{1}{2}\Delta\Bigr\rfloor.
\end{align*}
\end{theorem}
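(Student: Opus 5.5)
The plan is a greedy argument. Color the edges of $G$ one at a time in any order. When we reach an edge $uv$, we count the already-colored edges whose color $uv$ must avoid and show there are at most $\frac{9}{16}\Delta^2+\frac12\Delta-1$ of them. Then a palette of $\lfloor\frac{9}{16}\Delta^2+\frac12\Delta\rfloor$ colors always leaves a free color. The floor costs nothing, since the number of forbidden colors is an integer.

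\textbf{Structure of the conflicts.} By the equivalent definition of D-coloring, $uv$ conflicts with two kinds of edges.
\begin{itemize}
\item[(i)] Edges incident to $uv$. There are at most $2(\Delta-1)$ of these.
\item[(ii)] Edges $wx$ at distance $2$ from $uv$ with $|\{uw,ux,vw,vx\}\cap E|\ge 3$.
\end{itemize}
Put $A=N(u)\setminus\{v\}$, $B=N(v)\setminus\{u\}$, $C=A\cap B$ and $c=|C|$. For an edge $wx$ of type (ii), three of the four cross edges are present, so one endpoint lies in $C$ and the other lies in $A\cup B$. Hence the type (ii) edges are exactly the edges of $G-\{u,v\}$ that have an endpoint in $C$ and both endpoints in $A\cup B$. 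Write $R=(A\cup B)\setminus C$. Then $|A\cup B|\le 2\Delta-2-c$, so $|R|\le 2\Delta-2-2c$.

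\textbf{Counting type (ii) edges.} Let $e(C)$ be the number of edges inside $C$ and $e(C,R)$ the number of edges between $C$ and $R$. The number $f$ of type (ii) edges is at most $e(C)+e(C,R)$.

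Each vertex of $C$ is adjacent to both $u$ and $v$, so it has at most $\Delta-2$ further edges. This gives
\[
2e(C)+e(C,R)\le c(\Delta-2).
\]
Also $e(C,R)\le c|R|\le c(2\Delta-2-2c)$. Combining the two,
\[
f\le \tfrac12\bigl(2e(C)+e(C,R)\bigr)+\tfrac12 e(C,R)\le \tfrac12\bigl(c(\Delta-2)+c(2\Delta-2-2c)\bigr)=\tfrac12 c(3\Delta-4-2c).
\]
As a function of $c$, the right side is maximized at $c=\frac{3\Delta-4}{4}$. Its maximum value is $\frac{(3\Delta-4)^2}{16}=\frac{9}{16}\Delta^2-\frac32\Delta+1$.

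\textbf{Total.} The number of forbidden colors is at most
\[
2\Delta-2+\tfrac{9}{16}\Delta^2-\tfrac32\Delta+1=\tfrac{9}{16}\Delta^2+\tfrac12\Delta-1,
\]
so the greedy procedure succeeds with $\lfloor\frac{9}{16}\Delta^2+\frac12\Delta\rfloor$ colors.

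The main obstacle is the counting step for type (ii) edges. The naive bound $f\le c(\Delta-2)$, or even taking the minimum with the $R$-side bound, only gives about $\frac23\Delta^2$. What gets down to $\frac{9}{16}\Delta^2$ is averaging the degree-sum inequality (which charges each edge inside $C$ twice) against the size bound on $R$. I would double-check two points: that each type (ii) edge is counted once in $e(C)+e(C,R)$, and that the non-integer optimizer $c=\frac{3\Delta-4}{4}$ does no harm, since we only need an upper bound.
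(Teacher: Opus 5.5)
Your proof is correct and is essentially the paper's argument: the same greedy count of the $2\Delta-2$ incident edges plus the edges with an endpoint in the common neighbourhood $C$. These are controlled by the degree-sum constraint $2e(C)+e(C,R)\le c(\Delta-2)$ and the size bound $e(C,R)\le c(2\Delta-2-2c)$, leading to the same quadratic $-c^2+(\frac{3}{2}\Delta-2)c$ with maximum $\frac{9}{16}\Delta^2-\frac{3}{2}\Delta+1$. The differences are cosmetic. Your averaging identity covers all $c$ at once, where the paper splits into the cases $c\le\frac{1}{2}\Delta$ and $c>\frac{1}{2}\Delta$. And working with inequalities throughout lets you skip the paper's reduction to $\Delta$-regular graphs.
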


It is clear that, if $H$ is a subgraph of $G$, then $\chi'_D(H)\le \chi'_D(G)$. Combining this fact with the following lemma, we know that, to prove Theorem \ref{delta5} and Theorem \ref{general}, it suffices to prove the same conclusions for $\Delta$-regular graphs.

\begin{lemma}
    If the maximum degree of $G$ is $\Delta$, then $G$ is a subgraph of a $\Delta$-regular graph.
\end{lemma}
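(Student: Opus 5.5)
The plan is the classical doubling construction: embed $G$ into a graph $G_1$ with maximum degree $\Delta$ whose minimum degree is strictly larger than that of $G$. Iterating at most $\Delta$ times then yields a $\Delta$-regular graph containing $G$, since each step preserves maximum degree $\Delta$ and keeps the previous graph as a subgraph. We may assume $G$ is not already $\Delta$-regular; if $\Delta=0$ the statement is trivial.

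For the step, take two disjoint copies $G'$ and $G''$ of $G$, and write $v'$ and $v''$ for the copies of a vertex $v\in V(G)$. Form $G_1$ from $G'\cup G''$ by adding the edge $v'v''$ for every vertex $v$ with $d_G(v)<\Delta$. The graph $G_1$ is simple, because the new edges join the two copies and each such pair $\{v',v''\}$ is used at most once. Each vertex of degree less than $\Delta$ gains exactly one neighbour, and vertices of degree $\Delta$ are unchanged. Hence $\Delta(G_1)=\Delta$ and $\delta(G_1)=\delta(G)+1$. Finally, $G\cong G'\subseteq G_1$.

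Applying the step $\Delta-\delta(G)$ times gives a graph $H$ with $\delta(H)=\Delta(H)=\Delta$. By transitivity of the subgraph relation, $G$ is (isomorphic to) a subgraph of $H$, and $H$ is $\Delta$-regular. The argument involves no real obstacle; the only care needed is to confirm that the added edges keep the graph simple and do not raise the maximum degree, both of which were checked above.
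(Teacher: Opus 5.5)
Your proof is correct and is essentially the paper's argument: both use the doubling construction (take a copy of $G$, join vertices to their copies) and iterate it $\Delta-\delta$ times. The only cosmetic differences are that the paper joins just the vertices of minimum degree $\delta$ and keeps $G$ itself rather than two fresh copies, whereas you join every vertex of degree less than $\Delta$; either choice raises the minimum degree by one without exceeding $\Delta$.
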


\begin{proof}
    Let $G=(V,\,E)$ be a graph with maximum degree $\Delta$ and minimum degree $\delta<\Delta$. Let $U\subseteq V$ be the set of vertices with degree $\delta$. We make a copy of $G$ and denote it by $G'$. We denote the copy of $U$ by $U'$, and for each vertex $u\in U$, we denote its copy by $u'$. Now, we construct a new graph $G_1$ by keeping all the vertices and edges in $G$ and $G'$, and connecting each $u\in U$ with its copy $u'\in U'$. Then, $G$ is a subgraph of $G_1$, and the minimum degree of $G_1$ is $\delta+1$. Iterating this process $\Delta-\delta$ times, we get a $\Delta$-regular graph containing $G$ as a subgraph.
\end{proof}

Henceforth, we assume every graph to be connected, because for a disconnected graph $H$ with $k$ components $H_1,\,H_2,\,\dots,\,H_k$, we have $\chi'_D(H)=\max_{1\le i\le k}\chi'_D(H_i)$.

We organize the rest of this paper as follows: In Section 2, we prove the general upper bound in Theorem \ref{general}. In Section 3, we prove Theorem \ref{delta5}, which verifies Conjecture \ref{conj2} for $\Delta\le 5$. In Section 4, we give some remarks on the D-chromatic indices of planar graphs.

\section{Proof of Theorem \ref{general}}

For a graph $G$ and two edges $e$ and $e'$ in $G$, we say that \emph{$e$ sees $e'$ by $G$} (and $e'$ sees $e$ by $G$) if
\begin{itemize}
    \item $e$ and $e'$ are incident in $G$, \\or
    \item $e$ and $e'$ are not incident, but they are in the same diamond subgraph of $G$.
\end{itemize}
So, if $e$ and $e'$ see each other by $G$, then they must get different colors under a D-coloring of $G$. When our argument involves both $G$ and a subgraph $H\subseteq G$, we will specify whether "$e$ sees $e'$ by $G$" or "$e$ sees $e'$ by $H$". But when the context is clear, we may simply say "$e$ sees $e'$" instead of "$e$ sees $e'$ by $G$".

\begin{proof}[Proof of Theorem \ref{general}]
    As we mentioned, it suffices to prove Theorem \ref{general} for $\Delta$-regular graphs. Let $G$ be a $\Delta$-regular graph and let $e=xy$ be an edge in $G$. We prove that $e$ sees at most $\bigl\lfloor\frac{9}{16}\Delta^2+\frac{1}{2}\Delta\bigr\rfloor-1$ edges.

    First, as $x$ and $y$ both have degree $\Delta$, we know that $e$ is incident to $2\Delta-2$ edges.

    Then, we count the number of edges not incident to $e$ but each in a diamond subgraph with $e$. For a vertex $v$, denote by $N(v)$ the open neighborhood of $v$. Let $X:=N(x)\setminus \{y\}$, $Y:=N(y)\setminus \{x\}$, $Z:=X\cap Y$, and $\alpha:=|Z|$. If an edge $e'$ is not incident to $e$ but in a diamond subgraph with $e$, then $e'$ must have 
    \begin{itemize}
        \item both endpoints in $Z$, \\or
        \item one endpoint in $Z$ and the other endpoint in $(X\cup Y)\setminus Z$.
    \end{itemize}
    
    Each vertex in $Z$ is already adjacent to $x$ and $y$, so it has $\Delta-2$ more neighbors. Assume that there are $m$ edges with both endpoints in $Z$, and $n$ edges with one endpoint in $Z$ and the other endpoint in $(X\cup Y)\setminus Z$. Our goal is to maximize $m+n$. Each edge with both endpoints in $Z$ contributes $2$ to the degree sum of the vertices in $Z$, and each edge with one endpoint in $Z$ and the other endpoint in $(X\cup Y)\setminus Z$ contributes $1$ to the degree sum of the vertices in $Z$, so we have $2m+n\le \alpha(\Delta-2)$. Thus, for fixed $\alpha$, in order to maximize $m+n$, first we need to maximize $n$, and then we will take $m=\lfloor\frac{\alpha(\Delta-2)-n}{2}\rfloor$.

    We have $|(X\cup Y)\setminus Z|=2(\Delta-1-\alpha)$. Depending on the value of $\alpha$, there are two cases.

    \textbf{Case 1.} $\alpha\le \frac{1}{2}\Delta$.

    In this case, we have $\Delta-2\le 2(\Delta-1-\alpha)$, so the vertices in $Z$ can have all their neighbors in $(X\cup Y)\setminus Z$. Thus, the largest possible value of $n$ is $\alpha(\Delta-2)$, which means $m=0$, and we have 
    \begin{align}\label{eq1}
        m+n=0+\alpha(\Delta-2)\le \frac{1}{2}\Delta(\Delta-2)=\frac{1}{2}\Delta^2-\Delta.
    \end{align}
    
    \textbf{Case 2.} $\alpha>\frac{1}{2}\Delta$. 
    
    In this case, we have $\Delta-2>2(\Delta-1-\alpha)$, so each vertex in $Z$ can only have at most $2(\Delta-1-\alpha)$ neighbors in $(X\cup Y)\setminus Z$. Thus, the largest possible value of $n$ is $\alpha\cdot 2(\Delta-1-\alpha)$, which means $m=\lfloor\frac{\alpha(\Delta-2)-\alpha\cdot 2(\Delta-1-\alpha)}{2}\rfloor=\lfloor\alpha^2-\frac{\Delta\alpha}{2}\rfloor$, and we have 
    \begin{align}
        m+n&=\Bigl\lfloor\alpha^2-\frac{\Delta\alpha}{2}\Bigr\rfloor+\alpha\cdot 2(\Delta-1-\alpha) \nonumber \\
        &\le \alpha^2-\frac{\Delta\alpha}{2}+\alpha\cdot 2(\Delta-1-\alpha) \nonumber \\
        &=-\alpha^2+\biggl(\frac{3}{2}\Delta-2\biggr)\alpha \nonumber \\
        &\le \frac{9}{16}\Delta^2-\frac{3}{2}\Delta+1, \label{eq2}
    \end{align}
    where, for the last inequality, we consider $-\alpha^2+\bigl(\frac{3}{2}\Delta-2\bigr)\alpha$ as a function of a real variable $\alpha\in\mathbb{R}$, and it attains the maximum value $\frac{9}{16}\Delta^2-\frac{3}{2}\Delta+1$ when $\alpha=\frac{3}{4}\Delta-1$.

    Comparing \eqref{eq1} and \eqref{eq2}, we always have $\frac{1}{2}\Delta^2-\Delta\le \frac{9}{16}\Delta^2-\frac{3}{2}\Delta+1$, so $m+n\le \frac{9}{16}\Delta^2-\frac{3}{2}\Delta+1$. Thus, $e$ sees at most $\bigl\lfloor(2\Delta-2)+(\frac{9}{16}\Delta^2-\frac{3}{2}\Delta+1)\bigr\rfloor=\bigl\lfloor\frac{9}{16}\Delta^2+\frac{1}{2}\Delta\bigr\rfloor-1$ edges. With $\bigl\lfloor\frac{9}{16}\Delta^2+\frac{1}{2}\Delta\bigr\rfloor$ available colors, we can always find a feasible color for $e$, so $\chi'_D(G)\le \bigl\lfloor\frac{9}{16}\Delta^2+\frac{1}{2}\Delta\bigr\rfloor$.
\end{proof}

In the proof of Theorem \ref{delta5}, we will also use the idea of counting the number of edges with both endpoints in $Z$, or one endpoint in $Z$ and the other endpoint in $(X\cup Y)\setminus Z$.

\section{Proof of Theorem \ref{delta5}}

In this section, we prove Theorem \ref{delta5}. 

If $\Delta=1$ or $\Delta=2$, then $G$ cannot have a diamond subgraph, and we trivially have $\chi'_D(G)=1$ for $\Delta=1$ and $\chi'_D(G)\le 3$ for $\Delta=2$, as desired.

If $\Delta=3$, then, by Theorem \ref{general}, we have $\chi'_D(G)\le \bigl\lfloor\frac{105}{16}\bigr\rfloor=6$, as desired.

Now, we handle the case $\Delta=4$ or $\Delta=5$. For a graph $G$ and a vertex $v$ in $G$, denote by $G-v$ the subgraph obtained by deleting $v$ and all edges on $v$ from $G$.

\begin{lemma}\label{good}
    Let $G$ be a graph and let $v$ be a vertex in $G$. For two edges in $G-v$, if they do not see each other by $G-v$, then they do not see each other by $G$.
\end{lemma}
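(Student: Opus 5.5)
We prove the contrapositive: if two edges $e_1,e_2$ of $G-v$ see each other by $G$, then they see each other by $G-v$. The approach is to check that every witness to ``seeing'' in $G$ avoids $v$. The key facts are that incidence between $e_1$ and $e_2$ does not depend on the ambient graph, and that any diamond witnessing that $e_1$ sees $e_2$ has all four of its vertices among the endpoints of $e_1$ and $e_2$.

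First suppose $e_1$ and $e_2$ are incident in $G$. They share an endpoint, and that endpoint is a vertex of $G-v$ because both edges lie in $G-v$. Hence they are incident in $G-v$ and see each other by $G-v$.

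Now suppose $e_1=uw$ and $e_2=xz$ are not incident but lie in a common diamond subgraph $D$ of $G$. A diamond has exactly four vertices. Since $e_1$ and $e_2$ are disjoint edges of $D$, their four endpoints $u,w,x,z$ are distinct and together make up all of $V(D)$. None of these is $v$, because $e_1$ and $e_2$ are edges of $G-v$. So $v\notin V(D)$.

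Consequently every edge of $D$ joins two vertices different from $v$. Each such edge is an edge of $G$ not incident to $v$, so it is an edge of $G-v$. Thus $D$ is a diamond subgraph of $G-v$ containing both $e_1$ and $e_2$, and they see each other by $G-v$. This covers both cases and proves Lemma \ref{good}.

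There is no real obstacle here. The only point needing care is the observation that two disjoint edges of a diamond already exhaust its vertex set, which is what rules out the diamond passing through $v$.
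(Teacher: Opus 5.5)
Your proof is correct and follows essentially the same route as the paper: deleting $v$ does not affect incidence, and in the diamond case every vertex involved is an endpoint of one of the two edges, so $v$ cannot appear. The only difference is phrasing: the paper counts the cross edges $\{xz,\,xw,\,yz,\,yw\}$ rather than identifying the diamond's vertex set with the four endpoints, and argues by contradiction rather than contrapositive.
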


\begin{proof}
    Assume to the contrary that there exist two edges $e=xy$ and $e'=zw$ in $G-v$ that do not see each other by $G-v$ but see each other by $G$. If $e$ and $e'$ are incident in $G$, then they are also incident in $G-v$, a contradiction. Assume that $e$ and $e'$ are not incident, but they are in the same diamond subgraph of $G$. Then we have $|\{xz,\,xw,\,yz,\,yw\}\cap E(G)|\ge 3$ and $|\{xz,\,xw,\,yz,\,yw\}\cap E(G-v)|<3$, which implies $v\in\{x,\,y,\,z,\,w\}$, contradicting the assumption that both $e$ and $e'$ are in $G-v$.
\end{proof}

For $m,\,n\in\mathbb{N}$ with $m<n$, we use $\intss{m}{n}:=\{x\in\mathbb{N}:m\le x\le n\}$ and $\ints{n}:=\{x\in\mathbb{N}:1\le x\le n\}$ to denote discrete intervals.

If, by $f:E(G)\longrightarrow \ints{k}$, we can use $k$ colors to give $G$ a D-coloring, then $f$ is called a \emph{$k$-D-coloring} of $G$, and $G$ is said to be \emph{$k$-D-colorable}.

If, for a vertex $v\in V(G)$, $f:E(G-v)\longrightarrow \ints{k}$ is a $k$-D-coloring of $G-v$, then $f$ is called a \emph{partial coloring} of $G$. For an edge $e$ on $v$, let $L_f(e)$ denote the subset of $\ints{k}$ consisting of the colors that can be used on $e$, i.e.~the colors that are not used on an edge in $G-v$ that sees $e$ by $G$. Let $\ell_f(e):=|L_f(e)|$. It is clear that, if $e$ sees $t$ edges in $G-v$ by $G$, then $\ell_f(e)\ge k-t$.

Using the following lemma, we can extend some partial colorings of $G$ to $k$-D-colorings of $G$.

\begin{lemma}\label{extension}
    Let $G$ be a graph and let $v$ be a vertex in $G$ with $n$ edges $e_1,\,e_2,\,\dots,\,e_n$ on it. Assume that, for $k\in\mathbb{N}$, there is a $k$-D-coloring $f$ of $G-v$. If, up to re-ordering $e_1,\,e_2,\,\dots,\,e_n$, we have
    \begin{align*}
        \ell_f(e_i)\ge i
    \end{align*}
    for each $i\in \ints{n}$, then $f$ can be extended to a $k$-D-coloring of $G$.
\end{lemma}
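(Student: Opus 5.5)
We color the edges $e_1,\,e_2,\,\dots,\,e_n$ greedily in the order given by the hypothesis: first $e_1$, then $e_2$, and so on up to $e_n$. The point is that an edge $e_i$ on $v$ has two kinds of potential conflicts. The first kind consists of edges of $G-v$ that see $e_i$ by $G$; these are already accounted for in $L_f(e_i)$. The second kind consists of the other edges on $v$, namely $e_j$ with $j\neq i$. Any two edges on $v$ are incident, so they must get different colors in any case. Thus the only additional constraints beyond the lists are that $e_1,\,\dots,\,e_n$ receive pairwise distinct colors.

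Concretely, when we reach $e_i$, the edges $e_1,\,\dots,\,e_{i-1}$ have used at most $i-1$ colors. Since $\ell_f(e_i)\ge i$, at least one color of $L_f(e_i)$ is still unused, and we assign it to $e_i$. This yields an extension $g$ of $f$ to all of $E(G)$ in which each $e_i$ gets a color from $L_f(e_i)$ and the $e_i$ are pairwise distinct.

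It remains to verify that $g$ is a D-coloring of $G$, that is, that any two edges that see each other by $G$ get different colors. We split into three cases.
\begin{itemize}
    \item Both edges lie in $G-v$. Their colors are unchanged from $f$. If they saw each other by $G$ but had the same color, then, since $f$ is a D-coloring of $G-v$, they would not see each other by $G-v$. This contradicts Lemma \ref{good}.
    \item One edge is $e_i$ and the other lies in $G-v$. Then $g(e_i)\in L_f(e_i)$ differs from the color of every edge of $G-v$ that sees $e_i$ by $G$, by the definition of $L_f$.
    \item Both edges are on $v$. They are distinct by construction.
\end{itemize}
Hence $g$ is a $k$-D-coloring of $G$.

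The only mildly delicate point is the first case, where we must know that adding $v$ does not create new "seeing" relations between old edges. This is exactly the content of Lemma \ref{good}, so no real obstacle remains.
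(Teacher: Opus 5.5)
Your proposal is correct and follows essentially the same route as the paper: keep the colors of $G-v$ (justified by Lemma \ref{good}) and then color $e_1,\dots,e_n$ greedily in order, using that at most $i-1$ colors of $L_f(e_i)$ are already taken when $e_i$ is reached. Your three-case check that the result is a D-coloring spells out what the paper leaves implicit.
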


\begin{proof}
    First, by Lemma \ref{good}, we can let the edges in $G-v$ keep their colors under $f$. Then, we take $n$ steps to color $e_1,\,e_2,\,\dots,\,e_n$ in order. In the first step, as $\ell_f(e_1)\ge 1$, we can use a color in $L_f(e_1)$ on $e_1$. In the $i$-th step with $i\in \intss{2}{n}$, after coloring $e_1,\,e_2,\,\dots,\,e_{i-1}$, we still have at least $\ell_f(e_i)-(i-1)\ge 1$ color that can be used on $e_i$. Thus, we can color $e_1,\,e_2,\,\dots,\,e_n$ in order and extend $f$ to a $k$-D-coloring of $G$.
\end{proof}

For the case $\Delta=4$, as we have mentioned, it suffices to prove that every $4$-regular graph is $10$-D-colorable.

\begin{proof}[Proof of the case $\Delta=4$]
    Assume to the contrary that not every $4$-regular graph is $10$-D-colorable. Then, there is a \emph{minimal counterexample}, which is a $4$-regular graph $G=(V,\,E)$ with $\chi'_D(G)\ge 11$ and $\chi'_D(G-a)\le 10$ for every $a\in V$.

    Arbitrarily choose a vertex $u\in V$, let $uv,\ uw_1,\ uw_2$, and $uw_3$ be the four edges on $u$, and let $vu,\ vx_1,\ vx_2$, and $vx_3$ be the four edges on $v$. (It is possible that $w_i=x_j$ for any $i,\,j\in \ints{3}$.) By the minimality of $G$, we know that $G-u$ is $10$-D-colorable. Let $f:E(G-u)\longrightarrow \ints{10}$ be a $10$-D-coloring of $G-u$, so $f$ is a partial coloring of $G$.

    First, we show that $\ell_f(uv),\,\ell_f(uw_1),\,\ell_f(uw_2),\,\ell_f(uw_3)\ge 3$. Let $e=uv$, $X=\{w_1,\,w_2,\,w_3\}$, $Y=\{x_1,\,x_2,\,x_3\}$, $Z=\{w_1,\,w_2,\,w_3\}\cap \{x_1,\,x_2,\,x_3\}$, and $\alpha=|Z|=|\{w_1,\,w_2,\,w_3\}\cap \{x_1,\,x_2,\,x_3\}|$. Then, by the proof of Theorem \ref{general}, $uv$ sees at most $\bigl\lfloor\frac{9}{16}\Delta^2+\frac{1}{2}\Delta\bigr\rfloor-1=\bigl\lfloor\frac{9}{16}\cdot 4^2+\frac{1}{2}\cdot 4\bigr\rfloor-1=10$ edges by $G$, and as three of them are $uw_1$, $uw_2$, and $uw_3$, we know that $uv$ sees at most seven edges in $G-u$ by $G$, which means $\ell_f(uv)\ge 3$. By symmetry, we also have $\ell_f(uw_1),\,\ell_f(uw_2),\,\ell_f(uw_3)\ge 3$.

    If one of $\ell_f(uv),\ \ell_f(uw_1),\ \ell_f(uw_2)$, and $\ell_f(uw_3)$ is at least $4$, then, by Lemma \ref{extension}, we can extend $f$ to a $10$-D-coloring of $G$, contradicting the assumption that $G$ is a counterexample. Therefore, we may assume $\ell_f(uv)=\ell_f(uw_1)=\ell_f(uw_2)=\ell_f(uw_3)=3$.

    In order to have $\ell_f(uv)=3$, $uv$ must see seven edges in $G-u$ by $G$. We know that $uv$ sees $vx_1$, $vx_2$, and $vx_3$ by $G$, as $uv$ is adjacent to each of them. So, there must be four edges, each of which is not adjacent to $uv$ but in a diamond subgraph with $uv$. As we have mentioned in the proof of Theorem \ref{general}, each of these four edges either has both endpoints in $Z$, or has one endpoint in $Z$ and the other endpoint in $(X\cup Y)\setminus Z$.

    If $\alpha=0$, then $uv$ is not in any diamond subgraphs, a contradiction. If $\alpha=1$, then there are at most two edges not adjacent to $uv$ but in a diamond subgraph with $uv$, a contradiction. If $\alpha=3$, then there are at most three edges not adjacent to $uv$ but in a diamond subgraph with $uv$, a contradiction. Thus, we must have $\alpha=2$. Without loss of generality, we may assume $w_1=x_1$, $w_2=x_2$, but $w_3\neq x_3$. Now, $Z=\{w_1,\,w_2\}$ and $(X\cup Y)\setminus Z=\{w_3,\,x_3\}$. As $G$ is a $4$-regular graph, and both $w_1$ and $w_2$ are already adjacent to $u$ and $v$, we know that each of $w_1$ and $w_2$ has two more neighbors. Hence, in order to have four edges not adjacent to $uv$ but in a diamond subgraph with $uv$, we must have $\{w_1w_3,\,w_1x_3,\,w_2w_3,\,w_2x_3\}\subseteq E(G)$. So far, we have determined a subgraph of $G$ shown in Figure \ref{status1}.
    
    \begin{figure}[H]
        \tikzset{every picture/.style={line width=0.75pt}} %set default line width to 0.75pt        

\begin{tikzpicture}[x=0.75pt,y=0.75pt,yscale=-1,xscale=1]
%uncomment if require: \path (0,1186); %set diagram left start at 0, and has height of 1186

%Shape: Circle [id:dp43230594670872846] 
\draw  [fill={rgb, 255:red, 0; green, 0; blue, 0 }  ,fill opacity=1 ] (135.5,350) .. controls (135.5,346.96) and (137.96,344.5) .. (141,344.5) .. controls (144.04,344.5) and (146.5,346.96) .. (146.5,350) .. controls (146.5,353.04) and (144.04,355.5) .. (141,355.5) .. controls (137.96,355.5) and (135.5,353.04) .. (135.5,350) -- cycle ;
%Shape: Circle [id:dp23275381899545244] 
\draw  [fill={rgb, 255:red, 0; green, 0; blue, 0 }  ,fill opacity=1 ] (205.5,350) .. controls (205.5,346.96) and (207.96,344.5) .. (211,344.5) .. controls (214.04,344.5) and (216.5,346.96) .. (216.5,350) .. controls (216.5,353.04) and (214.04,355.5) .. (211,355.5) .. controls (207.96,355.5) and (205.5,353.04) .. (205.5,350) -- cycle ;
%Shape: Circle [id:dp6190393159151045] 
\draw  [fill={rgb, 255:red, 0; green, 0; blue, 0 }  ,fill opacity=1 ] (65.5,280) .. controls (65.5,276.96) and (67.96,274.5) .. (71,274.5) .. controls (74.04,274.5) and (76.5,276.96) .. (76.5,280) .. controls (76.5,283.04) and (74.04,285.5) .. (71,285.5) .. controls (67.96,285.5) and (65.5,283.04) .. (65.5,280) -- cycle ;
%Shape: Circle [id:dp49866664455824483] 
\draw  [fill={rgb, 255:red, 0; green, 0; blue, 0 }  ,fill opacity=1 ] (135.5,280) .. controls (135.5,276.96) and (137.96,274.5) .. (141,274.5) .. controls (144.04,274.5) and (146.5,276.96) .. (146.5,280) .. controls (146.5,283.04) and (144.04,285.5) .. (141,285.5) .. controls (137.96,285.5) and (135.5,283.04) .. (135.5,280) -- cycle ;
%Shape: Circle [id:dp7257781214166436] 
\draw  [fill={rgb, 255:red, 0; green, 0; blue, 0 }  ,fill opacity=1 ] (205.5,280) .. controls (205.5,276.96) and (207.96,274.5) .. (211,274.5) .. controls (214.04,274.5) and (216.5,276.96) .. (216.5,280) .. controls (216.5,283.04) and (214.04,285.5) .. (211,285.5) .. controls (207.96,285.5) and (205.5,283.04) .. (205.5,280) -- cycle ;
%Shape: Circle [id:dp07807108496555726] 
\draw  [fill={rgb, 255:red, 0; green, 0; blue, 0 }  ,fill opacity=1 ] (275.5,280) .. controls (275.5,276.96) and (277.96,274.5) .. (281,274.5) .. controls (284.04,274.5) and (286.5,276.96) .. (286.5,280) .. controls (286.5,283.04) and (284.04,285.5) .. (281,285.5) .. controls (277.96,285.5) and (275.5,283.04) .. (275.5,280) -- cycle ;
%Straight Lines [id:da41100991559444733] 
\draw    (71,280) -- (141,350) ;
%Straight Lines [id:da17598090701770375] 
\draw    (141,350) -- (211,350) ;
%Straight Lines [id:da7917093052094956] 
\draw [line width=2.25]    (211,350) -- (281,280) ;
%Straight Lines [id:da9020291904661302] 
\draw    (141,350) -- (141,280) ;
%Straight Lines [id:da7121901110590775] 
\draw [line width=2.25]    (211,350) -- (141,280) ;
%Straight Lines [id:da07202115703519885] 
\draw    (141,350) -- (211,280) ;
%Straight Lines [id:da8284715898476113] 
\draw [line width=2.25]    (211,350) -- (211,280) ;
%Curve Lines [id:da5671731011097452] 
\draw [line width=2.25]    (71,280) .. controls (92.2,264.6) and (117.2,263.6) .. (141,280) ;
%Curve Lines [id:da08137641368894499] 
\draw [line width=2.25]    (211,280) .. controls (232.2,264.6) and (257.2,263.6) .. (281,280) ;
%Curve Lines [id:da4022357933467331] 
\draw [line width=2.25]    (71,280) .. controls (105.2,235.6) and (180.2,234.6) .. (211,280) ;
%Curve Lines [id:da1541506008320208] 
\draw [line width=2.25]    (141,280) .. controls (175.2,235.6) and (250.2,234.6) .. (281,280) ;

% Text Node
\draw (134,359) node [anchor=north west][inner sep=0.75pt]  [color={rgb, 255:red, 0; green, 0; blue, 0 }  ,opacity=1 ] [align=left] {$\displaystyle u$};
% Text Node
\draw (206,359) node [anchor=north west][inner sep=0.75pt]  [color={rgb, 255:red, 0; green, 0; blue, 0 }  ,opacity=1 ] [align=left] {$\displaystyle v$};
% Text Node
\draw (44,276) node [anchor=north west][inner sep=0.75pt]  [color={rgb, 255:red, 0; green, 0; blue, 0 }  ,opacity=1 ] [align=left] {$\displaystyle w_{3}$};
% Text Node
\draw (149,276) node [anchor=north west][inner sep=0.75pt]  [color={rgb, 255:red, 0; green, 0; blue, 0 }  ,opacity=1 ] [align=left] {$\displaystyle w_{1}$};
% Text Node
\draw (219,276) node [anchor=north west][inner sep=0.75pt]  [color={rgb, 255:red, 0; green, 0; blue, 0 }  ,opacity=1 ] [align=left] {$\displaystyle w_{2}$};
% Text Node
\draw (290,276) node [anchor=north west][inner sep=0.75pt]  [color={rgb, 255:red, 0; green, 0; blue, 0 }  ,opacity=1 ] [align=left] {$\displaystyle x_{3}$};

\end{tikzpicture}
\caption{$uv$ sees the seven bold edges in $G-u$ by $G$.}
\label{status1}
    \end{figure}

    To show the structure of the edges incident to $uw_1$ more clearly, Figure \ref{status1} is redrawn as Figure \ref{status2}. As shown in Figure \ref{status2}, $uw_1$ already sees six edges in $G-u$ by $G$.

    \begin{figure}[H]
        \tikzset{every picture/.style={line width=0.75pt}} %set default line width to 0.75pt        

\begin{tikzpicture}[x=0.75pt,y=0.75pt,yscale=-1,xscale=1]
%uncomment if require: \path (0,1186); %set diagram left start at 0, and has height of 1186

%Shape: Circle [id:dp6968336864680434] 
\draw  [fill={rgb, 255:red, 0; green, 0; blue, 0 }  ,fill opacity=1 ] (466.5,419) .. controls (466.5,415.96) and (468.96,413.5) .. (472,413.5) .. controls (475.04,413.5) and (477.5,415.96) .. (477.5,419) .. controls (477.5,422.04) and (475.04,424.5) .. (472,424.5) .. controls (468.96,424.5) and (466.5,422.04) .. (466.5,419) -- cycle ;
%Shape: Circle [id:dp0016659604506253922] 
\draw  [fill={rgb, 255:red, 0; green, 0; blue, 0 }  ,fill opacity=1 ] (536.5,419) .. controls (536.5,415.96) and (538.96,413.5) .. (542,413.5) .. controls (545.04,413.5) and (547.5,415.96) .. (547.5,419) .. controls (547.5,422.04) and (545.04,424.5) .. (542,424.5) .. controls (538.96,424.5) and (536.5,422.04) .. (536.5,419) -- cycle ;
%Shape: Circle [id:dp6068091033049852] 
\draw  [fill={rgb, 255:red, 0; green, 0; blue, 0 }  ,fill opacity=1 ] (396.5,349) .. controls (396.5,345.96) and (398.96,343.5) .. (402,343.5) .. controls (405.04,343.5) and (407.5,345.96) .. (407.5,349) .. controls (407.5,352.04) and (405.04,354.5) .. (402,354.5) .. controls (398.96,354.5) and (396.5,352.04) .. (396.5,349) -- cycle ;
%Shape: Circle [id:dp779506227286651] 
\draw  [fill={rgb, 255:red, 0; green, 0; blue, 0 }  ,fill opacity=1 ] (466.5,349) .. controls (466.5,345.96) and (468.96,343.5) .. (472,343.5) .. controls (475.04,343.5) and (477.5,345.96) .. (477.5,349) .. controls (477.5,352.04) and (475.04,354.5) .. (472,354.5) .. controls (468.96,354.5) and (466.5,352.04) .. (466.5,349) -- cycle ;
%Shape: Circle [id:dp3564342636482223] 
\draw  [fill={rgb, 255:red, 0; green, 0; blue, 0 }  ,fill opacity=1 ] (536.5,349) .. controls (536.5,345.96) and (538.96,343.5) .. (542,343.5) .. controls (545.04,343.5) and (547.5,345.96) .. (547.5,349) .. controls (547.5,352.04) and (545.04,354.5) .. (542,354.5) .. controls (538.96,354.5) and (536.5,352.04) .. (536.5,349) -- cycle ;
%Shape: Circle [id:dp022645066422550597] 
\draw  [fill={rgb, 255:red, 0; green, 0; blue, 0 }  ,fill opacity=1 ] (606.5,349) .. controls (606.5,345.96) and (608.96,343.5) .. (612,343.5) .. controls (615.04,343.5) and (617.5,345.96) .. (617.5,349) .. controls (617.5,352.04) and (615.04,354.5) .. (612,354.5) .. controls (608.96,354.5) and (606.5,352.04) .. (606.5,349) -- cycle ;
%Straight Lines [id:da13636457703998373] 
\draw    (402,349) -- (472,419) ;
%Straight Lines [id:da6183693067600932] 
\draw    (472,419) -- (542,419) ;
%Straight Lines [id:da27339833538832836] 
\draw [line width=2.25]    (542,419) -- (612,349) ;
%Straight Lines [id:da8821354181787334] 
\draw    (472,419) -- (472,349) ;
%Straight Lines [id:da09987658343189132] 
\draw [line width=2.25]    (542,419) -- (472,349) ;
%Straight Lines [id:da4277020307492111] 
\draw    (472,419) -- (542,349) ;
%Straight Lines [id:da5284189054047002] 
\draw [line width=2.25]    (542,419) -- (542,349) ;
%Curve Lines [id:da968424930124565] 
\draw [line width=2.25]    (402,349) .. controls (423.2,333.6) and (448.2,332.6) .. (472,349) ;
%Curve Lines [id:da24655774347403336] 
\draw [line width=2.25]    (542,349) .. controls (563.2,333.6) and (588.2,332.6) .. (612,349) ;
%Curve Lines [id:da10930462797103813] 
\draw [line width=2.25]    (402,349) .. controls (436.2,304.6) and (511.2,303.6) .. (542,349) ;
%Curve Lines [id:da03962769208953487] 
\draw    (402,349) .. controls (450.1,260.36) and (566.1,259.36) .. (612,349) ;

% Text Node
\draw (465,428) node [anchor=north west][inner sep=0.75pt]  [color={rgb, 255:red, 0; green, 0; blue, 0 }  ,opacity=1 ] [align=left] {$\displaystyle u$};
% Text Node
\draw (537,428) node [anchor=north west][inner sep=0.75pt]  [color={rgb, 255:red, 0; green, 0; blue, 0 }  ,opacity=1 ] [align=left] {$\displaystyle w_{1}$};
% Text Node
\draw (375,345) node [anchor=north west][inner sep=0.75pt]  [color={rgb, 255:red, 0; green, 0; blue, 0 }  ,opacity=1 ] [align=left] {$\displaystyle w_{2}$};
% Text Node
\draw (480,345) node [anchor=north west][inner sep=0.75pt]  [color={rgb, 255:red, 0; green, 0; blue, 0 }  ,opacity=1 ] [align=left] {$\displaystyle w_{3}$};
% Text Node
\draw (550,347) node [anchor=north west][inner sep=0.75pt]  [color={rgb, 255:red, 0; green, 0; blue, 0 }  ,opacity=1 ] [align=left] {$\displaystyle v$};
% Text Node
\draw (621,345) node [anchor=north west][inner sep=0.75pt]  [color={rgb, 255:red, 0; green, 0; blue, 0 }  ,opacity=1 ] [align=left] {$\displaystyle x_{3}$};

\end{tikzpicture}
\caption{$uw_1$ sees the six bold edges in $G-u$ by $G$.}
\label{status2}
    \end{figure}

    Each of $w_3$ and $x_3$ has one more neighbor. If $w_3$ and $x_3$ are not adjacent, then $uw_1$ only sees six edges in $G-u$ by $G$, which means $\ell_f(uw_1)\ge 4$, a contradiction. So, $w_3$ and $x_3$ must be adjacent. But now, each of the six vertices in our graph has degree $4$, and as $G$ is a connected $4$-regular graph, the structure of $G$ has been determined: As shown in Figure \ref{status3}(a), $G$ is a graph that can be embedded on a regular octahedron. However, as shown in Figure \ref{status3}(b), $G$ is $6$-D-colorable, a contradiction.

    \begin{figure}[H]
        \tikzset{every picture/.style={line width=0.75pt}} %set default line width to 0.75pt        

\begin{tikzpicture}[x=0.75pt,y=0.75pt,yscale=-1,xscale=1]
%uncomment if require: \path (0,1535); %set diagram left start at 0, and has height of 1535

%Shape: Parallelogram [id:dp3999478514442797] 
\draw   (183.48,561.2) -- (285.6,561.2) -- (216.12,600.6) -- (114,600.6) -- cycle ;
%Shape: Circle [id:dp6403671949312965] 
\draw  [fill={rgb, 255:red, 0; green, 0; blue, 0 }  ,fill opacity=1 ] (194.3,497.2) .. controls (194.3,494.16) and (196.76,491.7) .. (199.8,491.7) .. controls (202.84,491.7) and (205.3,494.16) .. (205.3,497.2) .. controls (205.3,500.24) and (202.84,502.7) .. (199.8,502.7) .. controls (196.76,502.7) and (194.3,500.24) .. (194.3,497.2) -- cycle ;
%Shape: Circle [id:dp3058615986070633] 
\draw  [fill={rgb, 255:red, 0; green, 0; blue, 0 }  ,fill opacity=1 ] (177.98,561.2) .. controls (177.98,558.16) and (180.45,555.7) .. (183.48,555.7) .. controls (186.52,555.7) and (188.98,558.16) .. (188.98,561.2) .. controls (188.98,564.24) and (186.52,566.7) .. (183.48,566.7) .. controls (180.45,566.7) and (177.98,564.24) .. (177.98,561.2) -- cycle ;
%Shape: Circle [id:dp4233534237853186] 
\draw  [fill={rgb, 255:red, 0; green, 0; blue, 0 }  ,fill opacity=1 ] (108.5,600.6) .. controls (108.5,597.56) and (110.96,595.1) .. (114,595.1) .. controls (117.04,595.1) and (119.5,597.56) .. (119.5,600.6) .. controls (119.5,603.64) and (117.04,606.1) .. (114,606.1) .. controls (110.96,606.1) and (108.5,603.64) .. (108.5,600.6) -- cycle ;
%Shape: Circle [id:dp7392711358739618] 
\draw  [fill={rgb, 255:red, 0; green, 0; blue, 0 }  ,fill opacity=1 ] (210.62,600.6) .. controls (210.62,597.56) and (213.08,595.1) .. (216.12,595.1) .. controls (219.15,595.1) and (221.62,597.56) .. (221.62,600.6) .. controls (221.62,603.64) and (219.15,606.1) .. (216.12,606.1) .. controls (213.08,606.1) and (210.62,603.64) .. (210.62,600.6) -- cycle ;
%Shape: Circle [id:dp24645689008544147] 
\draw  [fill={rgb, 255:red, 0; green, 0; blue, 0 }  ,fill opacity=1 ] (280.1,561.2) .. controls (280.1,558.16) and (282.56,555.7) .. (285.6,555.7) .. controls (288.64,555.7) and (291.1,558.16) .. (291.1,561.2) .. controls (291.1,564.24) and (288.64,566.7) .. (285.6,566.7) .. controls (282.56,566.7) and (280.1,564.24) .. (280.1,561.2) -- cycle ;
%Shape: Circle [id:dp886747512716974] 
\draw  [fill={rgb, 255:red, 0; green, 0; blue, 0 }  ,fill opacity=1 ] (194.3,664.6) .. controls (194.3,661.56) and (196.76,659.1) .. (199.8,659.1) .. controls (202.84,659.1) and (205.3,661.56) .. (205.3,664.6) .. controls (205.3,667.64) and (202.84,670.1) .. (199.8,670.1) .. controls (196.76,670.1) and (194.3,667.64) .. (194.3,664.6) -- cycle ;
%Straight Lines [id:da7254039486666274] 
\draw    (199.8,497.2) -- (114,600.6) ;
%Straight Lines [id:da9432493108806523] 
\draw    (199.8,497.2) -- (183.48,561.2) ;
%Straight Lines [id:da5292064621331938] 
\draw    (199.8,497.2) -- (216.12,600.6) ;
%Straight Lines [id:da8054353531243066] 
\draw    (199.8,497.2) -- (285.6,561.2) ;
%Straight Lines [id:da042401438635126576] 
\draw    (114,600.6) -- (199.8,664.6) ;
%Straight Lines [id:da11472636220887344] 
\draw    (183.48,561.2) -- (199.8,664.6) ;
%Straight Lines [id:da6009485653926653] 
\draw    (216.12,600.6) -- (199.8,664.6) ;
%Straight Lines [id:da20225079343307695] 
\draw    (285.6,561.2) -- (199.8,664.6) ;
%Shape: Circle [id:dp2540626452788326] 
\draw  [fill={rgb, 255:red, 0; green, 0; blue, 0 }  ,fill opacity=1 ] (437.3,497.2) .. controls (437.3,494.16) and (439.76,491.7) .. (442.8,491.7) .. controls (445.84,491.7) and (448.3,494.16) .. (448.3,497.2) .. controls (448.3,500.24) and (445.84,502.7) .. (442.8,502.7) .. controls (439.76,502.7) and (437.3,500.24) .. (437.3,497.2) -- cycle ;
%Shape: Circle [id:dp6036325526573264] 
\draw  [fill={rgb, 255:red, 0; green, 0; blue, 0 }  ,fill opacity=1 ] (420.98,561.2) .. controls (420.98,558.16) and (423.45,555.7) .. (426.48,555.7) .. controls (429.52,555.7) and (431.98,558.16) .. (431.98,561.2) .. controls (431.98,564.24) and (429.52,566.7) .. (426.48,566.7) .. controls (423.45,566.7) and (420.98,564.24) .. (420.98,561.2) -- cycle ;
%Shape: Circle [id:dp664647319541885] 
\draw  [fill={rgb, 255:red, 0; green, 0; blue, 0 }  ,fill opacity=1 ] (351.5,600.6) .. controls (351.5,597.56) and (353.96,595.1) .. (357,595.1) .. controls (360.04,595.1) and (362.5,597.56) .. (362.5,600.6) .. controls (362.5,603.64) and (360.04,606.1) .. (357,606.1) .. controls (353.96,606.1) and (351.5,603.64) .. (351.5,600.6) -- cycle ;
%Shape: Circle [id:dp25774417134951433] 
\draw  [fill={rgb, 255:red, 0; green, 0; blue, 0 }  ,fill opacity=1 ] (453.62,600.6) .. controls (453.62,597.56) and (456.08,595.1) .. (459.12,595.1) .. controls (462.15,595.1) and (464.62,597.56) .. (464.62,600.6) .. controls (464.62,603.64) and (462.15,606.1) .. (459.12,606.1) .. controls (456.08,606.1) and (453.62,603.64) .. (453.62,600.6) -- cycle ;
%Shape: Circle [id:dp74274550271306] 
\draw  [fill={rgb, 255:red, 0; green, 0; blue, 0 }  ,fill opacity=1 ] (523.1,561.2) .. controls (523.1,558.16) and (525.56,555.7) .. (528.6,555.7) .. controls (531.64,555.7) and (534.1,558.16) .. (534.1,561.2) .. controls (534.1,564.24) and (531.64,566.7) .. (528.6,566.7) .. controls (525.56,566.7) and (523.1,564.24) .. (523.1,561.2) -- cycle ;
%Shape: Circle [id:dp7952511535779759] 
\draw  [fill={rgb, 255:red, 0; green, 0; blue, 0 }  ,fill opacity=1 ] (437.3,664.6) .. controls (437.3,661.56) and (439.76,659.1) .. (442.8,659.1) .. controls (445.84,659.1) and (448.3,661.56) .. (448.3,664.6) .. controls (448.3,667.64) and (445.84,670.1) .. (442.8,670.1) .. controls (439.76,670.1) and (437.3,667.64) .. (437.3,664.6) -- cycle ;
%Straight Lines [id:da9325342325416927] 
\draw [color={rgb, 255:red, 208; green, 2; blue, 27 }  ,draw opacity=1 ]   (442.8,497.2) -- (357,600.6) ;
%Straight Lines [id:da6252158308177805] 
\draw [color={rgb, 255:red, 245; green, 166; blue, 35 }  ,draw opacity=1 ]   (442.8,497.2) -- (426.48,561.2) ;
%Straight Lines [id:da43161866218883316] 
\draw [color={rgb, 255:red, 144; green, 19; blue, 254 }  ,draw opacity=1 ]   (442.8,497.2) -- (459.12,600.6) ;
%Straight Lines [id:da10751316505255182] 
\draw [color={rgb, 255:red, 74; green, 144; blue, 226 }  ,draw opacity=1 ]   (442.8,497.2) -- (528.6,561.2) ;
%Straight Lines [id:da6525381775684835] 
\draw [color={rgb, 255:red, 74; green, 144; blue, 226 }  ,draw opacity=1 ]   (357,600.6) -- (442.8,664.6) ;
%Straight Lines [id:da6189946016249229] 
\draw [color={rgb, 255:red, 144; green, 19; blue, 254 }  ,draw opacity=1 ]   (426.48,561.2) -- (442.8,664.6) ;
%Straight Lines [id:da0013258415004289148] 
\draw [color={rgb, 255:red, 245; green, 166; blue, 35 }  ,draw opacity=1 ]   (459.12,600.6) -- (442.8,664.6) ;
%Straight Lines [id:da6296560472345799] 
\draw [color={rgb, 255:red, 208; green, 2; blue, 27 }  ,draw opacity=1 ]   (528.6,561.2) -- (442.8,664.6) ;
%Straight Lines [id:da9222312418866446] 
\draw [color={rgb, 255:red, 65; green, 117; blue, 5 }  ,draw opacity=1 ]   (357,600.6) -- (459.12,600.6) ;
%Straight Lines [id:da21273261020017764] 
\draw [color={rgb, 255:red, 65; green, 117; blue, 5 }  ,draw opacity=1 ]   (426.48,561.2) -- (528.6,561.2) ;
%Straight Lines [id:da31930944828420094] 
\draw [color={rgb, 255:red, 80; green, 227; blue, 194 }  ,draw opacity=1 ]   (426.48,561.2) -- (357,600.6) ;
%Straight Lines [id:da20341478004391267] 
\draw [color={rgb, 255:red, 80; green, 227; blue, 194 }  ,draw opacity=1 ]   (528.6,561.2) -- (459.12,600.6) ;

% Text Node
\draw (93,597) node [anchor=north west][inner sep=0.75pt]  [color={rgb, 255:red, 0; green, 0; blue, 0 }  ,opacity=1 ] [align=left] {$\displaystyle u$};
% Text Node
\draw (208,492) node [anchor=north west][inner sep=0.75pt]  [color={rgb, 255:red, 0; green, 0; blue, 0 }  ,opacity=1 ] [align=left] {$\displaystyle v$};
% Text Node
\draw (157.9,549.9) node [anchor=north west][inner sep=0.75pt]  [color={rgb, 255:red, 0; green, 0; blue, 0 }  ,opacity=1 ] [align=left] {$\displaystyle w_{1}$};
% Text Node
\draw (221.62,600.6) node [anchor=north west][inner sep=0.75pt]  [color={rgb, 255:red, 0; green, 0; blue, 0 }  ,opacity=1 ] [align=left] {$\displaystyle w_{2}$};
% Text Node
\draw (190.62,673.6) node [anchor=north west][inner sep=0.75pt]  [color={rgb, 255:red, 0; green, 0; blue, 0 }  ,opacity=1 ] [align=left] {$\displaystyle w_{3}$};
% Text Node
\draw (293.62,556.6) node [anchor=north west][inner sep=0.75pt]  [color={rgb, 255:red, 0; green, 0; blue, 0 }  ,opacity=1 ] [align=left] {$\displaystyle x_{3}$};
% Text Node
\draw (336,597) node [anchor=north west][inner sep=0.75pt]  [color={rgb, 255:red, 0; green, 0; blue, 0 }  ,opacity=1 ] [align=left] {$\displaystyle u$};
% Text Node
\draw (451,492) node [anchor=north west][inner sep=0.75pt]  [color={rgb, 255:red, 0; green, 0; blue, 0 }  ,opacity=1 ] [align=left] {$\displaystyle v$};
% Text Node
\draw (400.9,549.9) node [anchor=north west][inner sep=0.75pt]  [color={rgb, 255:red, 0; green, 0; blue, 0 }  ,opacity=1 ] [align=left] {$\displaystyle w_{1}$};
% Text Node
\draw (464.62,600.6) node [anchor=north west][inner sep=0.75pt]  [color={rgb, 255:red, 0; green, 0; blue, 0 }  ,opacity=1 ] [align=left] {$\displaystyle w_{2}$};
% Text Node
\draw (433.62,673.6) node [anchor=north west][inner sep=0.75pt]  [color={rgb, 255:red, 0; green, 0; blue, 0 }  ,opacity=1 ] [align=left] {$\displaystyle w_{3}$};
% Text Node
\draw (536.62,556.6) node [anchor=north west][inner sep=0.75pt]  [color={rgb, 255:red, 0; green, 0; blue, 0 }  ,opacity=1 ] [align=left] {$\displaystyle x_{3}$};
% Text Node
\draw (191,705) node [anchor=north west][inner sep=0.75pt]   [align=left] {(a)};
% Text Node
\draw (434,704) node [anchor=north west][inner sep=0.75pt]   [align=left] {(b)};

\end{tikzpicture}
\caption{(a) The structure of $G$; (b) A $6$-D-coloring of $G$.}
\label{status3}
    \end{figure}

    Therefore, such a minimal counterexample $G$ does not exist, and the case $\Delta=4$ of Theorem \ref{delta5} is established.
\end{proof}

For the case $\Delta=5$, as we have mentioned, it suffices to prove that every $5$-regular graph is $15$-D-colorable.

\begin{proof}[Proof of the case $\Delta=5$]
    Assume to the contrary that not every $5$-regular graph is $15$-D-colorable. Then, there is a \emph{minimal counterexample}, which is a $5$-regular graph $G=(V,\,E)$ with $\chi'_D(G)\ge 16$ and $\chi'_D(G-a)\le 15$ for every $a\in V$.

    Arbitrarily choose a vertex $u\in V$, let $uv,\ uw_1,\ uw_2,\ uw_3$, and $uw_4$ be the five edges on $u$, and let $vu,\ vx_1,\ vx_2,\ vx_3$, and $vx_4$ be the five edges on $v$. (It is possible that $w_i=x_j$ for any $i,\,j\in \ints{4}$.) By the minimality of $G$, we know that $G-u$ is $15$-D-colorable. Let $f:E(G-u)\longrightarrow \ints{15}$ be a $15$-D-coloring of $G-u$, so $f$ is a partial coloring of $G$.

    First, we show that $\ell_f(uv),\,\ell_f(uw_1),\,\ell_f(uw_2),\,\ell_f(uw_3),\,\ell_f(uw_4)\ge 4$. Let $e=uv$, $X=\{w_1,\,w_2,\,w_3,\,w_4\}$, $Y=\{x_1,\,x_2,\,x_3,\,x_4\}$, $Z=\{w_1,\,w_2,\,w_3,\,w_4\}\cap \{x_1,\,x_2,\,x_3,\,x_4\}$, and $\alpha=|Z|=|\{w_1,\,w_2,\,w_3,\,w_4\}\cap \{x_1,\,x_2,\,x_3,\,x_4\}|$. Then, by the proof of Theorem \ref{general}, $uv$ sees at most $\bigl\lfloor\frac{9}{16}\Delta^2+\frac{1}{2}\Delta\bigr\rfloor-1=\bigl\lfloor\frac{9}{16}\cdot 5^2+\frac{1}{2}\cdot 5\bigr\rfloor-1=15$ edges by $G$, and as four of them are $uw_1$, $uw_2$, $uw_3$, and $uw_4$, we know that $uv$ sees at most $11$ edges in $G-u$ by $G$, which means $\ell_f(uv)\ge 4$. By symmetry, we also have $\ell_f(uw_1),\,\ell_f(uw_2),\,\ell_f(uw_3),\,\ell_f(uw_4)\ge 4$.

    If one of $\ell_f(uv),\ \ell_f(uw_1),\ \ell_f(uw_2),\ \ell_f(uw_3)$, and $\ell_f(uw_4)$ is at least $5$, then, by Lemma \ref{extension}, we can extend $f$ to a $15$-D-coloring of $G$, contradicting the assumption that $G$ is a counterexample. Therefore, we may assume $\ell_f(uv)=\ell_f(uw_1)=\ell_f(uw_2)=\ell_f(uw_3)=\ell_f(uw_4)=4$.

    In order to have $\ell_f(uv)=4$, $uv$ must see $11$ edges in $G-u$ by $G$. We know that $uv$ sees $vx_1$, $vx_2$, $vx_3$, and $vx_4$ by $G$, as $uv$ is adjacent to each of them. So, there must be seven edges, each of which is not adjacent to $uv$ but in a diamond subgraph with $uv$. As we have mentioned in the proof of Theorem \ref{general}, each of these seven edges either has both endpoints in $Z$, or has one endpoint in $Z$ and the other endpoint in $(X\cup Y)\setminus Z$.

    If $\alpha=0$, then $uv$ is not in any diamond subgraphs, a contradiction. If $\alpha=1$, then there are at most three edges not adjacent to $uv$ but in a diamond subgraph with $uv$, a contradiction. If $\alpha=2$, then there are at most six edges not adjacent to $uv$ but in a diamond subgraph with $uv$, a contradiction. If $\alpha=4$, then there are at most six edges not adjacent to $uv$ but in a diamond subgraph with $uv$, a contradiction. Thus, we must have $\alpha=3$. Without loss of generality, we may assume $w_1=x_1$, $w_2=x_2$, $w_3=x_3$, but $w_4\neq x_4$. Now we have $Z=\{w_1,\,w_2,\,w_3\}$ and $(X\cup Y)\setminus Z=\{w_4,\,x_4\}$. Depending on the number of edges with both endpoints in $Z$, there are four cases.

    \textbf{Case 1.} There are no edges with both endpoints in $Z=\{w_1,\,w_2,\,w_3\}$.

    In this case, we need to have seven edges each with one endpoint in $\{w_1,\,w_2,\,w_3\}$ and the other endpoint in $\{w_4,\,x_4\}$, which is impossible.

    \textbf{Case 2.} There is exactly one edge with both endpoints in $Z=\{w_1,\,w_2,\,w_3\}$.

    By symmetry, we may assume that $w_1 w_2$ is the only edge with both endpoints in $Z$. Then, there are six edges each with one endpoint in $\{w_1,\,w_2,\,w_3\}$ and the other endpoint in $\{w_4,\,x_4\}$, which means $\{w_1 w_4,\,w_1 x_4,\,w_2 w_4,\,w_2 x_4,\,w_3 w_4,\,w_3 x_4\}\subseteq E(G)$. So far, we have determined a subgraph of $G$ shown in Figure \ref{fig1}.

    \begin{figure}[H]
        \tikzset{every picture/.style={line width=0.75pt}} %set default line width to 0.75pt        

\begin{tikzpicture}[x=0.75pt,y=0.75pt,yscale=-1,xscale=1]
%uncomment if require: \path (0,1905); %set diagram left start at 0, and has height of 1905

%Shape: Circle [id:dp18378326685173163] 
\draw  [fill={rgb, 255:red, 0; green, 0; blue, 0 }  ,fill opacity=1 ] (192.5,437) .. controls (192.5,433.96) and (194.96,431.5) .. (198,431.5) .. controls (201.04,431.5) and (203.5,433.96) .. (203.5,437) .. controls (203.5,440.04) and (201.04,442.5) .. (198,442.5) .. controls (194.96,442.5) and (192.5,440.04) .. (192.5,437) -- cycle ;
%Shape: Circle [id:dp7291459111202497] 
\draw  [fill={rgb, 255:red, 0; green, 0; blue, 0 }  ,fill opacity=1 ] (262.5,437) .. controls (262.5,433.96) and (264.96,431.5) .. (268,431.5) .. controls (271.04,431.5) and (273.5,433.96) .. (273.5,437) .. controls (273.5,440.04) and (271.04,442.5) .. (268,442.5) .. controls (264.96,442.5) and (262.5,440.04) .. (262.5,437) -- cycle ;
%Shape: Circle [id:dp7764142208972562] 
\draw  [fill={rgb, 255:red, 0; green, 0; blue, 0 }  ,fill opacity=1 ] (87.5,367) .. controls (87.5,363.96) and (89.96,361.5) .. (93,361.5) .. controls (96.04,361.5) and (98.5,363.96) .. (98.5,367) .. controls (98.5,370.04) and (96.04,372.5) .. (93,372.5) .. controls (89.96,372.5) and (87.5,370.04) .. (87.5,367) -- cycle ;
%Shape: Circle [id:dp5562582824504767] 
\draw  [fill={rgb, 255:red, 0; green, 0; blue, 0 }  ,fill opacity=1 ] (157.5,367) .. controls (157.5,363.96) and (159.96,361.5) .. (163,361.5) .. controls (166.04,361.5) and (168.5,363.96) .. (168.5,367) .. controls (168.5,370.04) and (166.04,372.5) .. (163,372.5) .. controls (159.96,372.5) and (157.5,370.04) .. (157.5,367) -- cycle ;
%Shape: Circle [id:dp4611132558642759] 
\draw  [fill={rgb, 255:red, 0; green, 0; blue, 0 }  ,fill opacity=1 ] (227.5,367) .. controls (227.5,363.96) and (229.96,361.5) .. (233,361.5) .. controls (236.04,361.5) and (238.5,363.96) .. (238.5,367) .. controls (238.5,370.04) and (236.04,372.5) .. (233,372.5) .. controls (229.96,372.5) and (227.5,370.04) .. (227.5,367) -- cycle ;
%Shape: Circle [id:dp08720536839475412] 
\draw  [fill={rgb, 255:red, 0; green, 0; blue, 0 }  ,fill opacity=1 ] (297.5,367) .. controls (297.5,363.96) and (299.96,361.5) .. (303,361.5) .. controls (306.04,361.5) and (308.5,363.96) .. (308.5,367) .. controls (308.5,370.04) and (306.04,372.5) .. (303,372.5) .. controls (299.96,372.5) and (297.5,370.04) .. (297.5,367) -- cycle ;
%Shape: Circle [id:dp43886414738406854] 
\draw  [fill={rgb, 255:red, 0; green, 0; blue, 0 }  ,fill opacity=1 ] (367.5,367) .. controls (367.5,363.96) and (369.96,361.5) .. (373,361.5) .. controls (376.04,361.5) and (378.5,363.96) .. (378.5,367) .. controls (378.5,370.04) and (376.04,372.5) .. (373,372.5) .. controls (369.96,372.5) and (367.5,370.04) .. (367.5,367) -- cycle ;
%Straight Lines [id:da40507519559314453] 
\draw    (93,367) -- (198,437) ;
%Straight Lines [id:da15827475828932325] 
\draw    (163,367) -- (198,437) ;
%Straight Lines [id:da3590052353152159] 
\draw    (233,367) -- (198,437) ;
%Straight Lines [id:da28675620901117604] 
\draw    (303,367) -- (198,437) ;
%Straight Lines [id:da3246381820473736] 
\draw [line width=2.25]    (163,367) -- (268,437) ;
%Straight Lines [id:da7059281132498296] 
\draw [line width=2.25]    (233,367) -- (268,437) ;
%Straight Lines [id:da7381438653268706] 
\draw [line width=2.25]    (373,367) -- (268,437) ;
%Straight Lines [id:da49817674283261615] 
\draw [line width=2.25]    (303,367) -- (268,437) ;
%Curve Lines [id:da8992335354574972] 
\draw [line width=2.25]    (93,367) .. controls (114.2,351.6) and (139.2,350.6) .. (163,367) ;
%Curve Lines [id:da7622607909602611] 
\draw [line width=2.25]    (163,367) .. controls (184.2,351.6) and (209.2,350.6) .. (233,367) ;
%Curve Lines [id:da2472485572724742] 
\draw [line width=2.25]    (303,367) .. controls (324.2,351.6) and (349.2,350.6) .. (373,367) ;
%Curve Lines [id:da6930685978082338] 
\draw [line width=2.25]    (93,367) .. controls (127.2,322.6) and (202.2,321.6) .. (233,367) ;
%Curve Lines [id:da7411538814042209] 
\draw [line width=2.25]    (233,367) .. controls (267.2,322.6) and (342.2,321.6) .. (373,367) ;
%Curve Lines [id:da46811598263406884] 
\draw [line width=2.25]    (93,367) .. controls (141.1,278.36) and (257.1,277.36) .. (303,367) ;
%Curve Lines [id:da2980037637421795] 
\draw [line width=2.25]    (163,367) .. controls (211.1,278.36) and (327.1,277.36) .. (373,367) ;
%Straight Lines [id:da14986120222470534] 
\draw    (198,437) -- (268,437) ;

% Text Node
\draw (191,445) node [anchor=north west][inner sep=0.75pt]  [color={rgb, 255:red, 0; green, 0; blue, 0 }  ,opacity=1 ] [align=left] {$\displaystyle u$};
% Text Node
\draw (263,445) node [anchor=north west][inner sep=0.75pt]  [color={rgb, 255:red, 0; green, 0; blue, 0 }  ,opacity=1 ] [align=left] {$\displaystyle v$};
% Text Node
\draw (66,362) node [anchor=north west][inner sep=0.75pt]  [color={rgb, 255:red, 0; green, 0; blue, 0 }  ,opacity=1 ] [align=left] {$\displaystyle w_{4}$};
% Text Node
\draw (171,362) node [anchor=north west][inner sep=0.75pt]  [color={rgb, 255:red, 0; green, 0; blue, 0 }  ,opacity=1 ] [align=left] {$\displaystyle w_{1}$};
% Text Node
\draw (241,362) node [anchor=north west][inner sep=0.75pt]  [color={rgb, 255:red, 0; green, 0; blue, 0 }  ,opacity=1 ] [align=left] {$\displaystyle w_{2}$};
% Text Node
\draw (310,362) node [anchor=north west][inner sep=0.75pt]  [color={rgb, 255:red, 0; green, 0; blue, 0 }  ,opacity=1 ] [align=left] {$\displaystyle w_{3}$};
% Text Node
\draw (380,362) node [anchor=north west][inner sep=0.75pt]  [color={rgb, 255:red, 0; green, 0; blue, 0 }  ,opacity=1 ] [align=left] {$\displaystyle x_{4}$};

\end{tikzpicture}
\caption{$uv$ sees the $11$ bold edges in $G-u$ by $G$.}
\label{fig1}
    \end{figure}

    To show the structure of the edges incident to $uw_1$ more clearly, Figure \ref{fig1} is redrawn as Figure \ref{fig2}. As shown in Figure \ref{fig2}, $uw_1$ already sees $10$ edges in $G-u$ by $G$.

    \begin{figure}[H]
        \tikzset{every picture/.style={line width=0.75pt}} %set default line width to 0.75pt        

\begin{tikzpicture}[x=0.75pt,y=0.75pt,yscale=-1,xscale=1]
%uncomment if require: \path (0,1905); %set diagram left start at 0, and has height of 1905

%Shape: Circle [id:dp9895045453770652] 
\draw  [fill={rgb, 255:red, 0; green, 0; blue, 0 }  ,fill opacity=1 ] (540.5,444) .. controls (540.5,440.96) and (542.96,438.5) .. (546,438.5) .. controls (549.04,438.5) and (551.5,440.96) .. (551.5,444) .. controls (551.5,447.04) and (549.04,449.5) .. (546,449.5) .. controls (542.96,449.5) and (540.5,447.04) .. (540.5,444) -- cycle ;
%Shape: Circle [id:dp017705694980773234] 
\draw  [fill={rgb, 255:red, 0; green, 0; blue, 0 }  ,fill opacity=1 ] (610.5,444) .. controls (610.5,440.96) and (612.96,438.5) .. (616,438.5) .. controls (619.04,438.5) and (621.5,440.96) .. (621.5,444) .. controls (621.5,447.04) and (619.04,449.5) .. (616,449.5) .. controls (612.96,449.5) and (610.5,447.04) .. (610.5,444) -- cycle ;
%Shape: Circle [id:dp8969327372520656] 
\draw  [fill={rgb, 255:red, 0; green, 0; blue, 0 }  ,fill opacity=1 ] (435.5,374) .. controls (435.5,370.96) and (437.96,368.5) .. (441,368.5) .. controls (444.04,368.5) and (446.5,370.96) .. (446.5,374) .. controls (446.5,377.04) and (444.04,379.5) .. (441,379.5) .. controls (437.96,379.5) and (435.5,377.04) .. (435.5,374) -- cycle ;
%Shape: Circle [id:dp6687764817042279] 
\draw  [fill={rgb, 255:red, 0; green, 0; blue, 0 }  ,fill opacity=1 ] (505.5,374) .. controls (505.5,370.96) and (507.96,368.5) .. (511,368.5) .. controls (514.04,368.5) and (516.5,370.96) .. (516.5,374) .. controls (516.5,377.04) and (514.04,379.5) .. (511,379.5) .. controls (507.96,379.5) and (505.5,377.04) .. (505.5,374) -- cycle ;
%Shape: Circle [id:dp4826387865223931] 
\draw  [fill={rgb, 255:red, 0; green, 0; blue, 0 }  ,fill opacity=1 ] (575.5,374) .. controls (575.5,370.96) and (577.96,368.5) .. (581,368.5) .. controls (584.04,368.5) and (586.5,370.96) .. (586.5,374) .. controls (586.5,377.04) and (584.04,379.5) .. (581,379.5) .. controls (577.96,379.5) and (575.5,377.04) .. (575.5,374) -- cycle ;
%Shape: Circle [id:dp631545401332156] 
\draw  [fill={rgb, 255:red, 0; green, 0; blue, 0 }  ,fill opacity=1 ] (645.5,374) .. controls (645.5,370.96) and (647.96,368.5) .. (651,368.5) .. controls (654.04,368.5) and (656.5,370.96) .. (656.5,374) .. controls (656.5,377.04) and (654.04,379.5) .. (651,379.5) .. controls (647.96,379.5) and (645.5,377.04) .. (645.5,374) -- cycle ;
%Shape: Circle [id:dp08706549516335038] 
\draw  [fill={rgb, 255:red, 0; green, 0; blue, 0 }  ,fill opacity=1 ] (715.5,374) .. controls (715.5,370.96) and (717.96,368.5) .. (721,368.5) .. controls (724.04,368.5) and (726.5,370.96) .. (726.5,374) .. controls (726.5,377.04) and (724.04,379.5) .. (721,379.5) .. controls (717.96,379.5) and (715.5,377.04) .. (715.5,374) -- cycle ;
%Straight Lines [id:da6894079576592486] 
\draw    (441,374) -- (546,444) ;
%Straight Lines [id:da41622368277465793] 
\draw    (511,374) -- (546,444) ;
%Straight Lines [id:da7271051405998147] 
\draw    (581,374) -- (546,444) ;
%Straight Lines [id:da26968260959945467] 
\draw    (651,374) -- (546,444) ;
%Straight Lines [id:da26844821040432376] 
\draw [line width=2.25]    (511,374) -- (616,444) ;
%Straight Lines [id:da3549133162696376] 
\draw [line width=2.25]    (581,374) -- (616,444) ;
%Straight Lines [id:da12746596778882735] 
\draw [line width=2.25]    (721,374) -- (616,444) ;
%Straight Lines [id:da05923583609555194] 
\draw [line width=2.25]    (651,374) -- (616,444) ;
%Curve Lines [id:da06160818086099584] 
\draw [line width=2.25]    (441,374) .. controls (462.2,358.6) and (487.2,357.6) .. (511,374) ;
%Curve Lines [id:da7051387187611974] 
\draw [line width=2.25]    (511,374) .. controls (532.2,358.6) and (557.2,357.6) .. (581,374) ;
%Curve Lines [id:da6921480574115034] 
\draw [line width=2.25]    (581,374) .. controls (615.2,329.6) and (690.2,328.6) .. (721,374) ;
%Curve Lines [id:da19015067685172526] 
\draw [line width=2.25]    (441,374) .. controls (489.1,285.36) and (605.1,284.36) .. (651,374) ;
%Curve Lines [id:da4266285532122517] 
\draw [line width=2.25]    (511,374) .. controls (559.1,285.36) and (675.1,284.36) .. (721,374) ;
%Straight Lines [id:da42138818145526813] 
\draw    (546,444) -- (616,444) ;
%Curve Lines [id:da042912131139415055] 
\draw [line width=0.75]    (441,374) .. controls (490.33,236) and (675.33,235) .. (721,374) ;
%Curve Lines [id:da4008506946196918] 
\draw [line width=2.25]    (581,374) .. controls (602.2,358.6) and (627.2,357.6) .. (651,374) ;

% Text Node
\draw (539,452) node [anchor=north west][inner sep=0.75pt]  [color={rgb, 255:red, 0; green, 0; blue, 0 }  ,opacity=1 ] [align=left] {$\displaystyle u$};
% Text Node
\draw (611,452) node [anchor=north west][inner sep=0.75pt]  [color={rgb, 255:red, 0; green, 0; blue, 0 }  ,opacity=1 ] [align=left] {$\displaystyle w_{1}$};
% Text Node
\draw (414,369) node [anchor=north west][inner sep=0.75pt]  [color={rgb, 255:red, 0; green, 0; blue, 0 }  ,opacity=1 ] [align=left] {$\displaystyle w_{3}$};
% Text Node
\draw (519,369) node [anchor=north west][inner sep=0.75pt]  [color={rgb, 255:red, 0; green, 0; blue, 0 }  ,opacity=1 ] [align=left] {$\displaystyle v$};
% Text Node
\draw (589,369) node [anchor=north west][inner sep=0.75pt]  [color={rgb, 255:red, 0; green, 0; blue, 0 }  ,opacity=1 ] [align=left] {$\displaystyle w_{2}$};
% Text Node
\draw (658,369) node [anchor=north west][inner sep=0.75pt]  [color={rgb, 255:red, 0; green, 0; blue, 0 }  ,opacity=1 ] [align=left] {$\displaystyle w_{4}$};
% Text Node
\draw (728,369) node [anchor=north west][inner sep=0.75pt]  [color={rgb, 255:red, 0; green, 0; blue, 0 }  ,opacity=1 ] [align=left] {$\displaystyle x_{4}$};

\end{tikzpicture}
\caption{$uw_1$ sees the $10$ bold edges in $G-u$ by $G$.}
\label{fig2}
    \end{figure}

    In this subgraph of $G$, observe that 
    \begin{itemize}
        \item $w_3$, $w_4$, and $x_4$ only have degree $4$, and all other vertices have degree $5$;
        \item there is already an edge between $w_3$ and $w_4$.
    \end{itemize}

    If $w_4$ and $x_4$ are not adjacent, then $uw_1$ only sees $10$ edges in $G-u$ by $G$, which means $\ell_f(uw_1)\ge 5$, a contradiction. So $w_4$ and $x_4$ are adjacent. Now, all vertices except $w_3$ have degree $5$. As $G$ is a $5$-regular graph, we know that $w_3$ has another neighbor, say $y$, that is not in Figure \ref{fig2}. We know that $w_3$ and $y$ do not have any common neighbors, because the other four neighbors of $w_3$ are $u$, $v$, $w_4$, and $x_4$, each of which already has five neighbors. Now we have determined a subgraph of $G$ shown in Figure \ref{fig3}.

    \begin{figure}[H]
        \tikzset{every picture/.style={line width=0.75pt}} %set default line width to 0.75pt        

\begin{tikzpicture}[x=0.75pt,y=0.75pt,yscale=-1,xscale=1]
%uncomment if require: \path (0,2276); %set diagram left start at 0, and has height of 2276

%Shape: Circle [id:dp884880717407953] 
\draw  [fill={rgb, 255:red, 0; green, 0; blue, 0 }  ,fill opacity=1 ] (350.5,708) .. controls (350.5,704.96) and (352.96,702.5) .. (356,702.5) .. controls (359.04,702.5) and (361.5,704.96) .. (361.5,708) .. controls (361.5,711.04) and (359.04,713.5) .. (356,713.5) .. controls (352.96,713.5) and (350.5,711.04) .. (350.5,708) -- cycle ;
%Shape: Circle [id:dp8049459315973136] 
\draw  [fill={rgb, 255:red, 0; green, 0; blue, 0 }  ,fill opacity=1 ] (420.5,708) .. controls (420.5,704.96) and (422.96,702.5) .. (426,702.5) .. controls (429.04,702.5) and (431.5,704.96) .. (431.5,708) .. controls (431.5,711.04) and (429.04,713.5) .. (426,713.5) .. controls (422.96,713.5) and (420.5,711.04) .. (420.5,708) -- cycle ;
%Shape: Circle [id:dp7853334073918379] 
\draw  [fill={rgb, 255:red, 0; green, 0; blue, 0 }  ,fill opacity=1 ] (245.5,638) .. controls (245.5,634.96) and (247.96,632.5) .. (251,632.5) .. controls (254.04,632.5) and (256.5,634.96) .. (256.5,638) .. controls (256.5,641.04) and (254.04,643.5) .. (251,643.5) .. controls (247.96,643.5) and (245.5,641.04) .. (245.5,638) -- cycle ;
%Shape: Circle [id:dp8235752835955217] 
\draw  [fill={rgb, 255:red, 0; green, 0; blue, 0 }  ,fill opacity=1 ] (315.5,638) .. controls (315.5,634.96) and (317.96,632.5) .. (321,632.5) .. controls (324.04,632.5) and (326.5,634.96) .. (326.5,638) .. controls (326.5,641.04) and (324.04,643.5) .. (321,643.5) .. controls (317.96,643.5) and (315.5,641.04) .. (315.5,638) -- cycle ;
%Shape: Circle [id:dp4373980631620419] 
\draw  [fill={rgb, 255:red, 0; green, 0; blue, 0 }  ,fill opacity=1 ] (385.5,638) .. controls (385.5,634.96) and (387.96,632.5) .. (391,632.5) .. controls (394.04,632.5) and (396.5,634.96) .. (396.5,638) .. controls (396.5,641.04) and (394.04,643.5) .. (391,643.5) .. controls (387.96,643.5) and (385.5,641.04) .. (385.5,638) -- cycle ;
%Shape: Circle [id:dp7779054296568054] 
\draw  [fill={rgb, 255:red, 0; green, 0; blue, 0 }  ,fill opacity=1 ] (455.5,638) .. controls (455.5,634.96) and (457.96,632.5) .. (461,632.5) .. controls (464.04,632.5) and (466.5,634.96) .. (466.5,638) .. controls (466.5,641.04) and (464.04,643.5) .. (461,643.5) .. controls (457.96,643.5) and (455.5,641.04) .. (455.5,638) -- cycle ;
%Shape: Circle [id:dp22251640994224575] 
\draw  [fill={rgb, 255:red, 0; green, 0; blue, 0 }  ,fill opacity=1 ] (525.5,638) .. controls (525.5,634.96) and (527.96,632.5) .. (531,632.5) .. controls (534.04,632.5) and (536.5,634.96) .. (536.5,638) .. controls (536.5,641.04) and (534.04,643.5) .. (531,643.5) .. controls (527.96,643.5) and (525.5,641.04) .. (525.5,638) -- cycle ;
%Straight Lines [id:da5116122899474588] 
\draw [line width=0.75]    (251,638) -- (356,708) ;
%Straight Lines [id:da5034662145354167] 
\draw [line width=0.75]    (321,638) -- (356,708) ;
%Straight Lines [id:da0463865912490834] 
\draw [line width=0.75]    (391,638) -- (356,708) ;
%Straight Lines [id:da9897062135275843] 
\draw [line width=0.75]    (461,638) -- (356,708) ;
%Straight Lines [id:da6893794839311677] 
\draw [line width=0.75]    (321,638) -- (426,708) ;
%Straight Lines [id:da1718226041981893] 
\draw [line width=0.75]    (391,638) -- (426,708) ;
%Straight Lines [id:da9252306699604256] 
\draw [line width=0.75]    (531,638) -- (426,708) ;
%Straight Lines [id:da22889310051377632] 
\draw [line width=0.75]    (461,638) -- (426,708) ;
%Curve Lines [id:da748139807155767] 
\draw [line width=0.75]    (251,638) .. controls (272.2,622.6) and (297.2,621.6) .. (321,638) ;
%Curve Lines [id:da6128786690961019] 
\draw [line width=0.75]    (321,638) .. controls (342.2,622.6) and (367.2,621.6) .. (391,638) ;
%Curve Lines [id:da3273126436027488] 
\draw [line width=0.75]    (391,638) .. controls (425.2,593.6) and (500.2,592.6) .. (531,638) ;
%Curve Lines [id:da3032112749102023] 
\draw [line width=0.75]    (251,638) .. controls (299.1,549.36) and (415.1,548.36) .. (461,638) ;
%Curve Lines [id:da7060606045781787] 
\draw [line width=0.75]    (321,638) .. controls (369.1,549.36) and (485.1,548.36) .. (531,638) ;
%Straight Lines [id:da5146451980670814] 
\draw    (356,708) -- (426,708) ;
%Curve Lines [id:da3248398843395167] 
\draw [line width=0.75]    (251,638) .. controls (300.33,500) and (485.33,499) .. (531,638) ;
%Curve Lines [id:da5320267235913486] 
\draw [line width=0.75]    (391,638) .. controls (412.2,622.6) and (437.2,621.6) .. (461,638) ;
%Curve Lines [id:da5540233787766837] 
\draw [line width=0.75]    (181,638) .. controls (202.2,622.6) and (227.2,621.6) .. (251,638) ;
%Curve Lines [id:da2460654996095648] 
\draw [line width=0.75]    (461,638) .. controls (482.2,622.6) and (507.2,621.6) .. (531,638) ;
%Shape: Circle [id:dp8931005239617252] 
\draw  [fill={rgb, 255:red, 0; green, 0; blue, 0 }  ,fill opacity=1 ] (175.5,638) .. controls (175.5,634.96) and (177.96,632.5) .. (181,632.5) .. controls (184.04,632.5) and (186.5,634.96) .. (186.5,638) .. controls (186.5,641.04) and (184.04,643.5) .. (181,643.5) .. controls (177.96,643.5) and (175.5,641.04) .. (175.5,638) -- cycle ;
%Straight Lines [id:da32234049503383166] 
\draw [line width=0.75]    (181,638) -- (192.66,708) ;
%Straight Lines [id:da640784517992497] 
\draw [line width=0.75]    (181,638) -- (216,708) ;
%Straight Lines [id:da2292097616449782] 
\draw [line width=0.75]    (181,638) -- (146,708) ;
%Straight Lines [id:da35854778035817036] 
\draw [line width=0.75]    (181,638) -- (169.33,708) ;
%Shape: Circle [id:dp23638700280717573] 
\draw  [fill={rgb, 255:red, 0; green, 0; blue, 0 }  ,fill opacity=1 ] (163.83,708) .. controls (163.83,704.96) and (166.29,702.5) .. (169.33,702.5) .. controls (172.37,702.5) and (174.83,704.96) .. (174.83,708) .. controls (174.83,711.04) and (172.37,713.5) .. (169.33,713.5) .. controls (166.29,713.5) and (163.83,711.04) .. (163.83,708) -- cycle ;
%Shape: Circle [id:dp9563578371966294] 
\draw  [fill={rgb, 255:red, 0; green, 0; blue, 0 }  ,fill opacity=1 ] (140.5,708) .. controls (140.5,704.96) and (142.96,702.5) .. (146,702.5) .. controls (149.04,702.5) and (151.5,704.96) .. (151.5,708) .. controls (151.5,711.04) and (149.04,713.5) .. (146,713.5) .. controls (142.96,713.5) and (140.5,711.04) .. (140.5,708) -- cycle ;
%Shape: Circle [id:dp9542044857341715] 
\draw  [fill={rgb, 255:red, 0; green, 0; blue, 0 }  ,fill opacity=1 ] (210.5,708) .. controls (210.5,704.96) and (212.96,702.5) .. (216,702.5) .. controls (219.04,702.5) and (221.5,704.96) .. (221.5,708) .. controls (221.5,711.04) and (219.04,713.5) .. (216,713.5) .. controls (212.96,713.5) and (210.5,711.04) .. (210.5,708) -- cycle ;
%Shape: Circle [id:dp09249953783870801] 
\draw  [fill={rgb, 255:red, 0; green, 0; blue, 0 }  ,fill opacity=1 ] (187.16,708) .. controls (187.16,704.96) and (189.62,702.5) .. (192.66,702.5) .. controls (195.7,702.5) and (198.16,704.96) .. (198.16,708) .. controls (198.16,711.04) and (195.7,713.5) .. (192.66,713.5) .. controls (189.62,713.5) and (187.16,711.04) .. (187.16,708) -- cycle ;

% Text Node
\draw (349,716) node [anchor=north west][inner sep=0.75pt]  [color={rgb, 255:red, 0; green, 0; blue, 0 }  ,opacity=1 ] [align=left] {$\displaystyle u$};
% Text Node
\draw (421,716) node [anchor=north west][inner sep=0.75pt]  [color={rgb, 255:red, 0; green, 0; blue, 0 }  ,opacity=1 ] [align=left] {$\displaystyle w_{1}$};
% Text Node
\draw (224,633) node [anchor=north west][inner sep=0.75pt]  [color={rgb, 255:red, 0; green, 0; blue, 0 }  ,opacity=1 ] [align=left] {$\displaystyle w_{3}$};
% Text Node
\draw (329,633) node [anchor=north west][inner sep=0.75pt]  [color={rgb, 255:red, 0; green, 0; blue, 0 }  ,opacity=1 ] [align=left] {$\displaystyle v$};
% Text Node
\draw (399,633) node [anchor=north west][inner sep=0.75pt]  [color={rgb, 255:red, 0; green, 0; blue, 0 }  ,opacity=1 ] [align=left] {$\displaystyle w_{2}$};
% Text Node
\draw (468,633) node [anchor=north west][inner sep=0.75pt]  [color={rgb, 255:red, 0; green, 0; blue, 0 }  ,opacity=1 ] [align=left] {$\displaystyle w_{4}$};
% Text Node
\draw (538,633) node [anchor=north west][inner sep=0.75pt]  [color={rgb, 255:red, 0; green, 0; blue, 0 }  ,opacity=1 ] [align=left] {$\displaystyle x_{4}$};
% Text Node
\draw (162,633) node [anchor=north west][inner sep=0.75pt]  [color={rgb, 255:red, 0; green, 0; blue, 0 }  ,opacity=1 ] [align=left] {$\displaystyle y$};

\end{tikzpicture}
\caption{A subgraph of $G$.}
\label{fig3}
    \end{figure}

    By the minimality of $G$, we know that $G-w_3$ is $15$-D-colorable. Let $f':E(G-w_3)\longrightarrow \ints{15}$ be a $15$-D-coloring of $G-w_3$, so $f'$ is a partial coloring of $G$. As $w_3$ and $y$ do not have any common neighbors, $w_3 y$ is not in any diamond subgraphs of $G$, which means $w_3 y$ only sees four edges in $G-w_3$ (the four edges on $y$) by $G$. Now, for the edges on $w_3$, we have $\ell_{f'}(w_3 y)\ge 15-4=11$ and $\ell_{f'}(w_3 u),\,\ell_{f'}(w_3 v),\,\ell_{f'}(w_3 w_4),\,\ell_{f'}(w_3 x_4)\ge 4$. So, by Lemma \ref{extension}, we can extend $f'$ to a $15$-D-coloring of $G$, contradicting the assumption that $G$ is a counterexample.

    \textbf{Case 3.} There are exactly two edges with both endpoints in $Z=\{w_1,\,w_2,\,w_3\}$.

    By symmetry, we may assume that $w_1 w_2$ and $w_2 w_3$ are the two edges with both endpoints in $Z$. Then, there are five edges each with one endpoint in $\{w_1,\,w_2,\,w_3\}$ and the other endpoint in $\{w_4,\,x_4\}$. We know that $w_1$ already has three neighbors, $w_2$ already has four neighbors, and $w_3$ already has three neighbors, so 
    \begin{itemize}
        \item $w_1$ must be adjacent to both $w_4$ and $x_4$;
        \item $w_2$ must be adjacent to either $w_4$ or $x_4$;
        \item $w_3$ must be adjacent to both $w_4$ and $x_4$.
    \end{itemize}

    \textbf{Subcase 3.1.} $w_2$ is adjacent to $w_4$, which means $\{w_1 w_4,\,w_1 x_4,\,w_2 w_4,\,w_3 w_4,\,w_3 x_4\}\subseteq E(G)$.
    
    So far, we have determined a subgraph of $G$ shown in Figure \ref{fig4}.

    \begin{figure}[H]
        \tikzset{every picture/.style={line width=0.75pt}} %set default line width to 0.75pt        

\begin{tikzpicture}[x=0.75pt,y=0.75pt,yscale=-1,xscale=1]
%uncomment if require: \path (0,2276); %set diagram left start at 0, and has height of 2276

%Shape: Circle [id:dp6794061471219252] 
\draw  [fill={rgb, 255:red, 0; green, 0; blue, 0 }  ,fill opacity=1 ] (165.5,905.45) .. controls (165.5,902.42) and (167.96,899.95) .. (171,899.95) .. controls (174.04,899.95) and (176.5,902.42) .. (176.5,905.45) .. controls (176.5,908.49) and (174.04,910.95) .. (171,910.95) .. controls (167.96,910.95) and (165.5,908.49) .. (165.5,905.45) -- cycle ;
%Shape: Circle [id:dp19648445424040117] 
\draw  [fill={rgb, 255:red, 0; green, 0; blue, 0 }  ,fill opacity=1 ] (235.5,905.45) .. controls (235.5,902.42) and (237.96,899.95) .. (241,899.95) .. controls (244.04,899.95) and (246.5,902.42) .. (246.5,905.45) .. controls (246.5,908.49) and (244.04,910.95) .. (241,910.95) .. controls (237.96,910.95) and (235.5,908.49) .. (235.5,905.45) -- cycle ;
%Shape: Circle [id:dp9791136415874764] 
\draw  [fill={rgb, 255:red, 0; green, 0; blue, 0 }  ,fill opacity=1 ] (60.5,835.45) .. controls (60.5,832.42) and (62.96,829.95) .. (66,829.95) .. controls (69.04,829.95) and (71.5,832.42) .. (71.5,835.45) .. controls (71.5,838.49) and (69.04,840.95) .. (66,840.95) .. controls (62.96,840.95) and (60.5,838.49) .. (60.5,835.45) -- cycle ;
%Shape: Circle [id:dp7616756092412409] 
\draw  [fill={rgb, 255:red, 0; green, 0; blue, 0 }  ,fill opacity=1 ] (130.5,835.45) .. controls (130.5,832.42) and (132.96,829.95) .. (136,829.95) .. controls (139.04,829.95) and (141.5,832.42) .. (141.5,835.45) .. controls (141.5,838.49) and (139.04,840.95) .. (136,840.95) .. controls (132.96,840.95) and (130.5,838.49) .. (130.5,835.45) -- cycle ;
%Shape: Circle [id:dp2750154137350903] 
\draw  [fill={rgb, 255:red, 0; green, 0; blue, 0 }  ,fill opacity=1 ] (200.5,835.45) .. controls (200.5,832.42) and (202.96,829.95) .. (206,829.95) .. controls (209.04,829.95) and (211.5,832.42) .. (211.5,835.45) .. controls (211.5,838.49) and (209.04,840.95) .. (206,840.95) .. controls (202.96,840.95) and (200.5,838.49) .. (200.5,835.45) -- cycle ;
%Shape: Circle [id:dp6951959499572] 
\draw  [fill={rgb, 255:red, 0; green, 0; blue, 0 }  ,fill opacity=1 ] (270.5,835.45) .. controls (270.5,832.42) and (272.96,829.95) .. (276,829.95) .. controls (279.04,829.95) and (281.5,832.42) .. (281.5,835.45) .. controls (281.5,838.49) and (279.04,840.95) .. (276,840.95) .. controls (272.96,840.95) and (270.5,838.49) .. (270.5,835.45) -- cycle ;
%Shape: Circle [id:dp3456804655384741] 
\draw  [fill={rgb, 255:red, 0; green, 0; blue, 0 }  ,fill opacity=1 ] (340.5,835.45) .. controls (340.5,832.42) and (342.96,829.95) .. (346,829.95) .. controls (349.04,829.95) and (351.5,832.42) .. (351.5,835.45) .. controls (351.5,838.49) and (349.04,840.95) .. (346,840.95) .. controls (342.96,840.95) and (340.5,838.49) .. (340.5,835.45) -- cycle ;
%Straight Lines [id:da029887161152679087] 
\draw    (66,835.45) -- (171,905.45) ;
%Straight Lines [id:da9528852457956299] 
\draw    (136,835.45) -- (171,905.45) ;
%Straight Lines [id:da779634515195542] 
\draw    (206,835.45) -- (171,905.45) ;
%Straight Lines [id:da2955120317583616] 
\draw    (276,835.45) -- (171,905.45) ;
%Straight Lines [id:da011956158557505536] 
\draw [line width=2.25]    (136,835.45) -- (241,905.45) ;
%Straight Lines [id:da47095277956284454] 
\draw [line width=2.25]    (206,835.45) -- (241,905.45) ;
%Straight Lines [id:da11566095427481926] 
\draw [line width=2.25]    (346,835.45) -- (241,905.45) ;
%Straight Lines [id:da9844420463625737] 
\draw [line width=2.25]    (276,835.45) -- (241,905.45) ;
%Curve Lines [id:da506142663227691] 
\draw [line width=2.25]    (66,835.45) .. controls (87.2,820.05) and (112.2,819.05) .. (136,835.45) ;
%Curve Lines [id:da8802029659144277] 
\draw [line width=2.25]    (136,835.45) .. controls (157.2,820.05) and (182.2,819.05) .. (206,835.45) ;
%Curve Lines [id:da7658342850827086] 
\draw [line width=2.25]    (276,835.45) .. controls (297.2,820.05) and (322.2,819.05) .. (346,835.45) ;
%Curve Lines [id:da08407809832643154] 
\draw [line width=2.25]    (66,835.45) .. controls (100.2,791.05) and (175.2,790.05) .. (206,835.45) ;
%Curve Lines [id:da41884493210966556] 
\draw [line width=2.25]    (66,835.45) .. controls (114.1,746.82) and (230.1,745.82) .. (276,835.45) ;
%Curve Lines [id:da8450274471345318] 
\draw [line width=2.25]    (136,835.45) .. controls (184.1,746.82) and (300.1,745.82) .. (346,835.45) ;
%Straight Lines [id:da32780610412117506] 
\draw    (171,905.45) -- (241,905.45) ;
%Curve Lines [id:da7660933580579489] 
\draw [line width=2.25]    (206,835.45) .. controls (227.2,820.05) and (252.2,819.05) .. (276,835.45) ;

% Text Node
\draw (164,913.45) node [anchor=north west][inner sep=0.75pt]  [color={rgb, 255:red, 0; green, 0; blue, 0 }  ,opacity=1 ] [align=left] {$\displaystyle u$};
% Text Node
\draw (236,913.45) node [anchor=north west][inner sep=0.75pt]  [color={rgb, 255:red, 0; green, 0; blue, 0 }  ,opacity=1 ] [align=left] {$\displaystyle v$};
% Text Node
\draw (39,830.45) node [anchor=north west][inner sep=0.75pt]  [color={rgb, 255:red, 0; green, 0; blue, 0 }  ,opacity=1 ] [align=left] {$\displaystyle w_{4}$};
% Text Node
\draw (144,830.45) node [anchor=north west][inner sep=0.75pt]  [color={rgb, 255:red, 0; green, 0; blue, 0 }  ,opacity=1 ] [align=left] {$\displaystyle w_{1}$};
% Text Node
\draw (214,830.45) node [anchor=north west][inner sep=0.75pt]  [color={rgb, 255:red, 0; green, 0; blue, 0 }  ,opacity=1 ] [align=left] {$\displaystyle w_{2}$};
% Text Node
\draw (283,830.45) node [anchor=north west][inner sep=0.75pt]  [color={rgb, 255:red, 0; green, 0; blue, 0 }  ,opacity=1 ] [align=left] {$\displaystyle w_{3}$};
% Text Node
\draw (353,830.45) node [anchor=north west][inner sep=0.75pt]  [color={rgb, 255:red, 0; green, 0; blue, 0 }  ,opacity=1 ] [align=left] {$\displaystyle x_{4}$};

\end{tikzpicture}
\caption{$uv$ sees the $11$ bold edges in $G-u$ by $G$.}
\label{fig4}
    \end{figure}

    To show the structure of the edges incident to $uw_1$ more clearly, Figure \ref{fig4} is redrawn as Figure \ref{fig5}. As shown in Figure \ref{fig5}, $uw_1$ already sees $10$ edges in $G-u$ by $G$.

    \begin{figure}[H]
        \tikzset{every picture/.style={line width=0.75pt}} %set default line width to 0.75pt        

\begin{tikzpicture}[x=0.75pt,y=0.75pt,yscale=-1,xscale=1]
%uncomment if require: \path (0,2276); %set diagram left start at 0, and has height of 2276

%Shape: Circle [id:dp12159989267229343] 
\draw  [fill={rgb, 255:red, 0; green, 0; blue, 0 }  ,fill opacity=1 ] (541.5,929) .. controls (541.5,925.96) and (543.96,923.5) .. (547,923.5) .. controls (550.04,923.5) and (552.5,925.96) .. (552.5,929) .. controls (552.5,932.04) and (550.04,934.5) .. (547,934.5) .. controls (543.96,934.5) and (541.5,932.04) .. (541.5,929) -- cycle ;
%Shape: Circle [id:dp6963645741766236] 
\draw  [fill={rgb, 255:red, 0; green, 0; blue, 0 }  ,fill opacity=1 ] (611.5,929) .. controls (611.5,925.96) and (613.96,923.5) .. (617,923.5) .. controls (620.04,923.5) and (622.5,925.96) .. (622.5,929) .. controls (622.5,932.04) and (620.04,934.5) .. (617,934.5) .. controls (613.96,934.5) and (611.5,932.04) .. (611.5,929) -- cycle ;
%Shape: Circle [id:dp24773439006922582] 
\draw  [fill={rgb, 255:red, 0; green, 0; blue, 0 }  ,fill opacity=1 ] (436.5,859) .. controls (436.5,855.96) and (438.96,853.5) .. (442,853.5) .. controls (445.04,853.5) and (447.5,855.96) .. (447.5,859) .. controls (447.5,862.04) and (445.04,864.5) .. (442,864.5) .. controls (438.96,864.5) and (436.5,862.04) .. (436.5,859) -- cycle ;
%Shape: Circle [id:dp9503265143679341] 
\draw  [fill={rgb, 255:red, 0; green, 0; blue, 0 }  ,fill opacity=1 ] (506.5,859) .. controls (506.5,855.96) and (508.96,853.5) .. (512,853.5) .. controls (515.04,853.5) and (517.5,855.96) .. (517.5,859) .. controls (517.5,862.04) and (515.04,864.5) .. (512,864.5) .. controls (508.96,864.5) and (506.5,862.04) .. (506.5,859) -- cycle ;
%Shape: Circle [id:dp8945163409696586] 
\draw  [fill={rgb, 255:red, 0; green, 0; blue, 0 }  ,fill opacity=1 ] (576.5,859) .. controls (576.5,855.96) and (578.96,853.5) .. (582,853.5) .. controls (585.04,853.5) and (587.5,855.96) .. (587.5,859) .. controls (587.5,862.04) and (585.04,864.5) .. (582,864.5) .. controls (578.96,864.5) and (576.5,862.04) .. (576.5,859) -- cycle ;
%Shape: Circle [id:dp2719237749011424] 
\draw  [fill={rgb, 255:red, 0; green, 0; blue, 0 }  ,fill opacity=1 ] (646.5,859) .. controls (646.5,855.96) and (648.96,853.5) .. (652,853.5) .. controls (655.04,853.5) and (657.5,855.96) .. (657.5,859) .. controls (657.5,862.04) and (655.04,864.5) .. (652,864.5) .. controls (648.96,864.5) and (646.5,862.04) .. (646.5,859) -- cycle ;
%Shape: Circle [id:dp1886896114717721] 
\draw  [fill={rgb, 255:red, 0; green, 0; blue, 0 }  ,fill opacity=1 ] (716.5,859) .. controls (716.5,855.96) and (718.96,853.5) .. (722,853.5) .. controls (725.04,853.5) and (727.5,855.96) .. (727.5,859) .. controls (727.5,862.04) and (725.04,864.5) .. (722,864.5) .. controls (718.96,864.5) and (716.5,862.04) .. (716.5,859) -- cycle ;
%Straight Lines [id:da8764559227534371] 
\draw    (442,859) -- (547,929) ;
%Straight Lines [id:da8459192622768932] 
\draw    (512,859) -- (547,929) ;
%Straight Lines [id:da8046367285821258] 
\draw    (582,859) -- (547,929) ;
%Straight Lines [id:da2160905890136886] 
\draw    (652,859) -- (547,929) ;
%Straight Lines [id:da8770437345024903] 
\draw [line width=2.25]    (512,859) -- (617,929) ;
%Straight Lines [id:da9577833657142072] 
\draw [line width=2.25]    (582,859) -- (617,929) ;
%Straight Lines [id:da3261226708116586] 
\draw [line width=2.25]    (722,859) -- (617,929) ;
%Straight Lines [id:da4722556896075166] 
\draw [line width=2.25]    (652,859) -- (617,929) ;
%Curve Lines [id:da9618331423000243] 
\draw [line width=2.25]    (442,859) .. controls (463.2,843.6) and (488.2,842.6) .. (512,859) ;
%Curve Lines [id:da7715682724068258] 
\draw [line width=2.25]    (512,859) .. controls (533.2,843.6) and (558.2,842.6) .. (582,859) ;
%Curve Lines [id:da9031022674881148] 
\draw [line width=2.25]    (442,859) .. controls (476.2,814.6) and (551.2,813.6) .. (582,859) ;
%Curve Lines [id:da4452672233769779] 
\draw [line width=2.25]    (442,859) .. controls (490.1,770.36) and (606.1,769.36) .. (652,859) ;
%Curve Lines [id:da2992572496576097] 
\draw [line width=2.25]    (512,859) .. controls (560.1,770.36) and (676.1,769.36) .. (722,859) ;
%Straight Lines [id:da560108358047234] 
\draw    (547,929) -- (617,929) ;
%Curve Lines [id:da44026493365173913] 
\draw [line width=0.75]    (442,859) .. controls (491.33,721) and (676.33,720) .. (722,859) ;
%Curve Lines [id:da7366313809732198] 
\draw [line width=2.25]    (582,859) .. controls (603.2,843.6) and (628.2,842.6) .. (652,859) ;

% Text Node
\draw (540,937) node [anchor=north west][inner sep=0.75pt]  [color={rgb, 255:red, 0; green, 0; blue, 0 }  ,opacity=1 ] [align=left] {$\displaystyle u$};
% Text Node
\draw (612,937) node [anchor=north west][inner sep=0.75pt]  [color={rgb, 255:red, 0; green, 0; blue, 0 }  ,opacity=1 ] [align=left] {$\displaystyle w_{1}$};
% Text Node
\draw (415,854) node [anchor=north west][inner sep=0.75pt]  [color={rgb, 255:red, 0; green, 0; blue, 0 }  ,opacity=1 ] [align=left] {$\displaystyle w_{3}$};
% Text Node
\draw (520,854) node [anchor=north west][inner sep=0.75pt]  [color={rgb, 255:red, 0; green, 0; blue, 0 }  ,opacity=1 ] [align=left] {$\displaystyle v$};
% Text Node
\draw (590,854) node [anchor=north west][inner sep=0.75pt]  [color={rgb, 255:red, 0; green, 0; blue, 0 }  ,opacity=1 ] [align=left] {$\displaystyle w_{2}$};
% Text Node
\draw (659,854) node [anchor=north west][inner sep=0.75pt]  [color={rgb, 255:red, 0; green, 0; blue, 0 }  ,opacity=1 ] [align=left] {$\displaystyle w_{4}$};
% Text Node
\draw (729,854) node [anchor=north west][inner sep=0.75pt]  [color={rgb, 255:red, 0; green, 0; blue, 0 }  ,opacity=1 ] [align=left] {$\displaystyle x_{4}$};

\end{tikzpicture}
\caption{$uw_1$ sees the $10$ bold edges in $G-u$ by $G$.}
\label{fig5}
    \end{figure}

    In this subgraph of $G$, observe that $w_4$ only has degree $4$, $x_4$ only has degree $3$, and all other vertices have degree $5$.

    If $w_4$ and $x_4$ are not adjacent, then $uw_1$ only sees $10$ edges in $G-u$ by $G$, which means $\ell_f(uw_1)\ge 5$, a contradiction. So $w_4$ and $x_4$ are adjacent. Now, all vertices except $x_4$ have degree $5$. As $G$ is a $5$-regular graph, we know that $x_4$ has another neighbor, say $y$, that is not in Figure \ref{fig5}. We know that $x_4$ and $y$ do not have any common neighbors, because the other four neighbors of $x_4$ are $v$, $w_1$, $w_3$, and $w_4$, each of which already has five neighbors. Now we have determined a subgraph of $G$, which is isomorphic to the one in Figure \ref{fig3}. Similar to Case 2, we can extend a $15$-D-coloring of $G-x_4$ to a $15$-D-coloring of $G$, contradicting the assumption that $G$ is a counterexample.
    
    \textbf{Subcase 3.2.} $w_2$ is adjacent to $x_4$, which means $\{w_1 w_4,\,w_1 x_4,\,w_2 x_4,\,w_3 w_4,\,w_3 x_4\}\subseteq E(G)$.

    This subcase is similar to Subcase 3.1: By counting the number of edges in $G-u$ that sees $uw_1$ by $G$, we know that $w_4$ is adjacent to $x_4$. Then, $w_4$ has degree $4$ and all other vertices have degree $5$, forcing $w_4$ to have another neighbor $y$. Finally, we can extend a $15$-D-coloring of $G-w_4$ to a $15$-D-coloring of $G$, contradicting the assumption that $G$ is a counterexample. The details are omitted here.

    \textbf{Case 4.} There are three edges with both endpoints in $Z=\{w_1,\,w_2,\,w_3\}$, which means $\{w_1 w_2,\,w_1 w_3,\,w_2 w_3\}\subseteq E(G)$.

    In this case, we need to have $7-3=4$ edges each with one endpoint in $\{w_1,\,w_2,\,w_3\}$ and the other endpoint in $\{w_4,\,x_4\}$. However, as $G$ is a $5$-regular graph, and each of $w_1$, $w_2$, and $w_3$ already has four neighbors ($w_1$ is adjacent to $u$, $v$, $w_2$, and $w_3$; $w_2$ is adjacent to $u$, $v$, $w_1$, and $w_3$; and $w_3$ is adjacent to $u$, $v$, $w_1$, and $w_2$), there are at most three edges each with one endpoint in $\{w_1,\,w_2,\,w_3\}$ and the other endpoint in $\{w_4,\,x_4\}$, a contradiction.

    In each case, we get a contradiction. Hence, such a minimal counterexample $G$ does not exist, and the case $\Delta=5$ of Theorem \ref{delta5} is established.
\end{proof}

\section{Remarks on planar graphs}

For B-colorings, Gy\'arf\'as et al.~\cite{GMRS} conjectured that, if $\Delta$ is sufficiently large, then $\chi'_B(G)\le 2\Delta$ for every planar graph $G$ with maximum degree $\Delta$. In \cite{KWZ}, Kong et al.~verified this conjecture by proving that $\chi'_B(G)\le 2\Delta$ if $G$ is a planar graph with $\Delta\ge 38$. This upper bound is sharp, as it can be attained by $K_{2,\,\Delta}$.

As every B-coloring is a D-coloring, we also have $\chi'_D(G)\le 2\Delta$ if $G$ is a planar graph with $\Delta\ge 38$. Moreover, by Theorem \ref{delta5}, we know that if $G$ is a planar graph with $\Delta\le 3$, then 

\begin{itemize}
    \item $\chi'_D(G)=1$ if $\Delta=1$;
    \item $\chi'_D(G)\le 3$ if $\Delta=2$, which is sharp because $K_3$ is planar and $\chi'_D(K_3)=3$;
    \item $\chi'_D(G)\le 6$ if $\Delta=3$, which is sharp because $K_4$ is planar and $\chi'_D(K_4)=6$.
\end{itemize}

For $\Delta\ge 4$, we make the following conjecture.

\begin{conjecture}\label{conj3}
    For every planar graph $G$ with maximum degree $\Delta\ge 4$,
    \[
    \chi'_D(G)\le\begin{cases}
        9 &\text{if } \Delta=4, \\
        10 &\text{if } \Delta=5, \\
        2\Delta-1 &\text{if } \Delta\ge 6.
    \end{cases}
    \]
\end{conjecture}

If true, this upper bound will be sharp, because:
\begin{itemize}
    \item For $\Delta=4$, under a D-coloring of Figure \ref{sharp}(a), all $9$ edges must get distinct colors.
    \item For $\Delta=5$, under a D-coloring of Figure \ref{sharp}(b), the $10$ edges in $\{uv,\,uw_1,\,uw_2,\,uw_3,\,\allowbreak uw_4,\,vw_1,\,vw_2,\,vw_3,\,vw_4,\,w_2 w_3\}$ must get distinct colors (but $w_1 w_2$ may have the same color as $uw_4$ or $vw_4$, and $w_3 w_4$ may have the same color as $uw_1$ or $vw_1$).
    \item For $\Delta\ge 6$, under a D-coloring of Figure \ref{sharp}(c), all $2\Delta-1$ edges must get distinct colors. Note that Figure \ref{sharp}(c) is constructed by taking the complete bipartite graph $K_{2,\,\Delta-1}$ and connecting the two vertices in the part of size $2$.
\end{itemize}

\begin{figure}[H]
    \input{sharp}
\end{figure}

For future work, one may consider improving the general upper bound in Theorem \ref{general}, resolving the case $\Delta\ge 6$ of Conjecture \ref{conj2}, and resolving Conjecture \ref{conj3}.


\begin{thebibliography}{10}

\bibitem{An}
L.~D. Andersen.
\newblock The strong chromatic index of a cubic graph is at most {$10$}.
\newblock volume 108, pages 231--252. 1992.
\newblock Topological, algebraical and combinatorial structures. Frol\'ik's memorial volume.

\bibitem{BLMSS}
L.~Bezegov\'a, B.~Lu\v{z}ar, M.~Mockov\v{c}iakov\'a, R.~Sot\'ak, and R.~\v{S}krekovski.
\newblock Star edge coloring of some classes of graphs.
\newblock {\em J. Graph Theory}, 81(1):73--82, 2016.

\bibitem{BPP}
M.~Bonamy, T.~Perrett, and L.~Postle.
\newblock Colouring graphs with sparse neighbourhoods: bounds and applications.
\newblock {\em J. Combin. Theory Ser. B}, 155:278--317, 2022.

\bibitem{BJ}
H.~Bruhn and F.~Joos.
\newblock A stronger bound for the strong chromatic index.
\newblock {\em Combin. Probab. Comput.}, 27(1):21--43, 2018.

\bibitem{CHYZ}
L.~Chen, M.~Huang, G.~Yu, and X.~Zhou.
\newblock The strong edge-coloring for graphs with small edge weight.
\newblock {\em Discrete Math.}, 343(4):111779, 11, 2020.

\bibitem{DMS}
Z.~Dvo\v{r}\'ak, B.~Mohar, and R.~\v{S}\'amal.
\newblock Star chromatic index.
\newblock {\em J. Graph Theory}, 72(3):313--326, 2013.

\bibitem{EG}
P.~Erd\H{o}s and A.~Gy\'arf\'as.
\newblock A variant of the classical {R}amsey problem.
\newblock {\em Combinatorica}, 17(4):459--467, 1997.

\bibitem{EN}
P.~Erd\H{o}s and J.~Ne\v{s}et\v{r}il.
\newblock {\em Irregularities of partitions}, volume~8 of {\em Algorithms and Combinatorics: Study and Research Texts, edited by G. Hal\'asz and V.T. S\'os}.
\newblock Springer-Verlag, Berlin, 1989.
\newblock Papers from the meeting held in Fert\H od, July 7--11, 1986.

\bibitem{GMRS}
A.~Gy\'arf\'as, R.~R. Martin, M.~Ruszink\'o, and G.~N. S\'ark\"ozy.
\newblock Proper edge colorings of planar graphs with rainbow {$C_4$}-s.
\newblock {\em J. Graph Theory}, 107(4):833--846, 2024.

\bibitem{GS}
A.~Gy\'arf\'as and G.~N. S\'ark\"ozy.
\newblock ``{L}ess'' strong chromatic indices and the {$(7, 4)$}-conjecture.
\newblock {\em Studia Sci. Math. Hungar.}, 60(2-3):109--122, 2023.

\bibitem{GSW}
A.~Gy\'arf\'as, G.~N. S\'ark\"ozy, and A.~Z. Wagner.
\newblock Perfect proper edge colorings of regular bipartite graphs with rainbow {$C_4$}-s.
\newblock {\em Discrete Math.}, 349(6):Paper No. 115005, 12, 2026.

\bibitem{HHT}
P.~Hor\'ak, Q.~He, and W.~T. Trotter.
\newblock Induced matchings in cubic graphs.
\newblock {\em J. Graph Theory}, 17(2):151--160, 1993.

\bibitem{HSY}
M.~Huang, M.~Santana, and G.~Yu.
\newblock Strong chromatic index of graphs with maximum degree four.
\newblock {\em Electron. J. Combin.}, 25(3):Paper No. 3.31, 24, 2018.

\bibitem{HDK}
E.~Hurley, R.~de~Joannis~de Verclos, and R.~J. Kang.
\newblock An improved procedure for colouring graphs of bounded local density.
\newblock {\em Adv. Comb.}, pages Paper No. 7, 33, 2022.

\bibitem{KWZ}
J.~Kong, Y.~Wang, and M.~Zheng.
\newblock B-coloring of planar graphs.
\newblock {\em Journal of Graph Theory}, published online, 2026.

\bibitem{LL}
Y.~Lin and W.~Lin.
\newblock Strong chromatic index of claw-free graphs with edge weight seven.
\newblock {\em Discuss. Math. Graph Theory}, 44(4):1311--1325, 2024.

\bibitem{LLH}
J.~Lu, H.~Liu, and X.~Hu.
\newblock On strong edge-coloring of graphs with maximum degree 5.
\newblock {\em Discrete Appl. Math.}, 344:120--128, 2024.

\bibitem{LLY}
J.-B. Lv, X.~Li, and G.~Yu.
\newblock On strong edge-coloring of graphs with maximum degree 4.
\newblock {\em Discrete Appl. Math.}, 235:142--153, 2018.

\bibitem{MR}
M.~Molloy and B.~Reed.
\newblock A bound on the strong chromatic index of a graph.
\newblock {\em J. Combin. Theory Ser. B}, 69(2):103--109, 1997.

\bibitem{NN}
K.~Nakprasit and K.~Nakprasit.
\newblock The strong chromatic index of graphs with restricted {O}re-degrees.
\newblock {\em Ars Combin.}, 118:373--380, 2015.

\bibitem{NY}
S.~Nelson and G.~Yu.
\newblock Planar graphs with {O}re-degree at most seven is strongly 13-edge-colorable.
\newblock {\em Discrete Appl. Math.}, 386:345--364, 2026.

\bibitem{Wa1}
R.~Wang.
\newblock Strong edge-coloring of graphs with maximum edge weight seven.
\newblock {\em J. Comb. Optim.}, 51(1):Paper No. 2, 11, 2026.

\bibitem{Wa2}
R.~Wang.
\newblock Strong edge-coloring of sparse graphs with ore-degree 7 or 8.
\newblock {\em arXiv preprint arXiv:2602.03862}, 2026.

\bibitem{WL}
J.~Wu and W.~Lin.
\newblock The strong chromatic index of a class of graphs.
\newblock {\em Discrete Math.}, 308(24):6254--6261, 2008.

\end{thebibliography}
\end{document}